\newtheorem{theorem}{Theorem}[section]
\newtheorem{lemma}[theorem]{Lemma}
\newtheorem{corollary}[theorem]{Corollary}
\theoremstyle{definition}
\newtheorem{definition}[theorem]{Definition}
\newtheorem{proposition}[theorem]{Proposition}
\theoremstyle{remark}
\newcommand{\floor}[1]{\lfloor #1 \rfloor}
\numberwithin{equation}{section}
\begin{document}

\title{Partitions of Equiangular Tight Frames}

%    Information for first author
\author{James Rosado}
%    Address of record for the research reported here
\address{Department of Mathematics, Rowan University} 
%    Current address
\curraddr{201 Mullica Hill Road,
Glassboro,  New Jersey 08028}
\email{rosado42@rowan.edu}
%    \thanks will become a 1st page footnote.

%    Information for second author
\author{Hieu D. Nguyen}
\address{Department of Mathematics, Rowan University} 
%    Current address
\curraddr{201 Mullica Hill Road,
Glassboro,  New Jersey 08028}
\email{nguyen@rowan.edu}

%\thanks{Support information for the second author.}

\author{Lei Cao}
\address{Department of Mathematics, Georgian Court University}
\curraddr{900 Lakewood Avenue, Lakewood, NJ 08701
}
\email{leicaomath@gmail.com}

%    General info
\subjclass[2010]{Primary 42C15, 15A60}

\keywords{Equiangular tight frames, Grassmannian frames, conference matrices}

\date{11-11-2016}

%\dedicatory{This paper is dedicated to our advisors.}

\begin{abstract}
We present a new efficient algorithm to construct partitions of a special class of equiangular tight frames (ETFs) that satisfy the operator norm bound established by a theorem of Marcus, Spielman, and Srivastava (MSS), which they proved as a corollary yields a positive solution to the Kadison-Singer problem. In particular, we prove that certain diagonal partitions of complex ETFs generated by recursive skew-symmetric conference matrices yield a refinement of the MSS bound.  Moreover, we prove that all partitions of ETFs whose largest subset has cardinality three or less also satisfy the MSS bound.
\end{abstract}

\maketitle

%% main text

\section{Introduction}
The concept of frames is relatively new and has its origins in harmonic and functional analysis, operator theory, linear algebra, and matrix theory. Frames can be thought as a relaxation of a vector space basis. Unlike a basis, a frame consists of vectors that are not necessarily linearly independent; however, they still span the entire vector space.  Frames were born as a result of the limited capabilities of bases being utilized in signal processing and sampling theory. In particular, frames allow for redundancy, meaning a set of frame vectors may contain multiple copies of the same vector, and/or some of the vectors are linear combinations of other frame vectors. This property is useful in the fields of communications and signal processing where signals suffer from erasures or noise; redundant frame vectors can then be used to reconstruct the original signal \cite{KOVA}.
\par
One class of important frames are Grassmannian frames, i.e., those with low coherence where the maximum correlation between its vectors is minimized.  They have connections to packings in Grassmannian spaces, spherical codes, and strongly regular graphs.  Equiangular tight frames (ETFs) are Grassmanian frames which meet the Welch bound and represent maximal packings of lines in real or complex space.
\par
In this article we present a linear-time algorithm to construct partitions of a special class of complex ETFs and prove that the subset norms of these partitions, i.e., the norms of the sum of outer products of all the frame vectors in each subset, meet the operator norm bound specified by a theorem of Marcus, Spielman, and Srivastava (MSS) \cite{MSS2}.  Finding algorithms to partition frames in general was described as an open problem in \cite{MSS2}.  We believe that our algorithm is the first of its kind for any known class of frames and in particular for ETFs generated from skew-symmetric conference matrices. Such algorithms are useful because not every partition satisfies the MSS theorem; a counterexample is given after Theorem \ref{MSS}. On the other hand, we also prove that partitions of ETFs whose subsets all have cardinality less than four must satisfy the MSS bound. 
\par 
The proof of our algorithm relies on constructing special partitions that we call diagonal partitions since their subsets correspond to sub-block matrices along the diagonal of the Gram matrix associated to each EFT.  Fortunately, these Gram matrices, when constructed from skew-symmetric conference matrices, have spectra that can be computed exactly.  As a result, we obtain a bound on the subset norms of these diagonal partitions and show that it is sharper than the bound described by the following theorem \cite[Corollary 1.5]{MSS2}, which we call the MSS theorem.

\begin{theorem}[Marcus-Spielman-Srivastava \cite{MSS2}]\label{MSS}
Let $r$ be a positive integer and $\mathcal{V}=$ \\ $\{v_1,\ldots,v_n\}$ be a set of vectors in  $\mathbb{C}^m$ satisfying
\begin{equation}\label{MSSHYP}
\sum_{j=1}^n v_j\otimes v_j=\mathbf{I},
\end{equation}
where $\lVert v_j \rVert^2\leq \delta$ for all $j$. Then there exists a partition $\mathcal{P}=\{\mathcal{S}_1,\ldots,\mathcal{S}_r\}$ of $[n]=\{1,\ldots,n\}$ such that the subset norms satisfy
\begin{equation}\label{eqn18}
    \left\lVert\sum_{j\in \mathcal{S}_k}v_j\otimes v_j \right\rVert\leq \left(\frac{1}{\sqrt{r}}+\sqrt{\delta} \right)^2
\end{equation}
for all $k\in\{1,\ldots, r\}$.
\end{theorem}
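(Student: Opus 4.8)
The plan is to deduce this statement from the central probabilistic theorem of \cite{MSS2} (their Theorem 1.4), which asserts that if $w_1,\dots,w_n$ are independent random vectors in $\mathbb{C}^N$ with finite support satisfying $\sum_i \mathbb{E}[w_i\otimes w_i]=\mathbf{I}$ and $\mathbb{E}[\|w_i\|^2]\le\epsilon$, then with positive probability $\|\sum_i w_i\otimes w_i\|\le(1+\sqrt{\epsilon})^2$. The reduction to a partition statement is obtained by a direct-sum (block) construction. Working in $\mathbb{C}^{rm}$, I would attach to each index $j$ a random vector $w_j$ that places a rescaled copy $\sqrt{r}\,v_j$ into one of the $r$ coordinate blocks, each block chosen with probability $1/r$. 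A short computation then gives $\sum_j \mathbb{E}[w_j\otimes w_j]=\sum_{k=1}^r (e_k e_k^{*})\otimes(\sum_j v_j\otimes v_j)=\mathbf{I}_{rm}$ by hypothesis \eqref{MSSHYP}, while $\|w_j\|^2=r\|v_j\|^2\le r\delta$, so that $\epsilon=r\delta$.

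Each outcome of the $w_j$'s corresponds to an assignment $\sigma\colon[n]\to[r]$, i.e.\ to a partition $\mathcal{P}=\{\mathcal{S}_1,\dots,\mathcal{S}_r\}$ with $\mathcal{S}_k=\sigma^{-1}(k)$. For such an outcome the matrix $\sum_j w_j\otimes w_j$ is block-diagonal, its $k$-th block being $r\sum_{j\in\mathcal{S}_k}v_j\otimes v_j$, so its operator norm equals $\max_k \|r\sum_{j\in\mathcal{S}_k}v_j\otimes v_j\|$. Applying the probabilistic theorem with $\epsilon=r\delta$ then produces a particular partition for which this maximum is at most $(1+\sqrt{r\delta})^2$; dividing by $r$ gives $\|\sum_{j\in\mathcal{S}_k}v_j\otimes v_j\|\le (1+\sqrt{r\delta})^2/r=(1/\sqrt{r}+\sqrt{\delta})^2$ for every $k$, which is exactly \eqref{eqn18}.

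The genuine difficulty is entirely contained in the probabilistic theorem itself, which I would prove by the method of interlacing families of polynomials. The scheme is: (i) discretize so that the $w_i$ take finitely many values, and observe that it suffices to bound the largest root of the \emph{expected} characteristic polynomial $\mathbb{E}[\det(x\mathbf{I}-\sum_i w_i\otimes w_i)]$, since the characteristic polynomials over all outcomes form an interlacing family and hence some outcome has largest root no greater than that of the expectation; (ii) identify this expected polynomial with the \emph{mixed characteristic polynomial} $\mu[A_1,\dots,A_n](x)=\prod_i(1-\partial_{z_i})\det(x\mathbf{I}+\sum_i z_i A_i)|_{z=0}$ of the matrices $A_i=\mathbb{E}[w_i\otimes w_i]$; and (iii) bound the largest root of $\mu$.

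Step (iii) is the main obstacle and the technical heart of the argument. Here I would invoke the theory of real stable polynomials: $\det(x\mathbf{I}+\sum_i z_i A_i)$ is real stable, the operators $1-\partial_{z_i}$ preserve real stability, and the largest root of $\mu$ can be controlled by introducing, for a real stable $p$, the multivariate \emph{barrier functions} $\Phi^i_p=(\partial_{z_i}p)/p$ evaluated at a point lying above all roots. One then shows quantitatively that each application of $1-\partial_{z_i}$ moves the root-free region upward by a controlled amount, and optimizing the resulting estimate over the upward shift is what yields the bound $(1+\sqrt{\epsilon})^2$. Since this is precisely the Kadison--Singer breakthrough of \cite{MSS2}, I would not reproduce it in full but rather invoke it as a black box, contributing only the elementary block-diagonal reduction carried out above.
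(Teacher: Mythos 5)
Your proof is correct: the block-diagonal lift into $\mathbb{C}^{rm}$ with a uniformly random block assignment, followed by an application of the probabilistic theorem with $\epsilon = r\delta$ and the rescaling $(1+\sqrt{r\delta})^2/r = (1/\sqrt{r}+\sqrt{\delta})^2$, is exactly the argument by which Marcus, Spielman, and Srivastava deduce this statement (their Corollary 1.5) from their Theorem 1.4. The paper under review offers no proof of its own — it quotes the result directly from \cite{MSS2} — so your derivation coincides with the proof in the cited source, and treating the interlacing-families/mixed-characteristic-polynomial machinery as a black box is the appropriate division of labor here.
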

Any partition that satisfies the bound in Theorem \ref{MSS} will be called a MSS partition. Numerical experiments show that partitions of ETFs with small subset cardinalities are MSS partitions. On the other hand not all partitions are MSS partitions; numerical calculations reveal that given an ETF with 32 vectors, a partition where one subset has cardinality 16 and all other subsets have cardinality 1 is not necessarily a MSS partition. An example of such an invalid partition is one that contains the subset $\mathcal{S}=\{2,5,7,8,9,10,11,13,14,16,17,19,21,25,27,31\}$, where the values in this subset correspond to the indices of the vectors in the frame $\mathcal{V}=\mathcal{F}=\{f_j\}_{j=1}^{32}$ as constructed in Section 2. If we compute the subset norm of $\mathcal{S}$, then we find that its value is greater than the MSS bound for $r=17$.
\par
Our main result is the following theorem, which states that partitions of a certain form, called diagonal partions, are in fact MSS partitions and provides a bound on their subset norms that is sharper than (\ref{eqn18}) with $\delta=1/2$.
\begin{theorem}\label{JHL} % edited %theorem 10/9/2016
Let $\mathcal{F}=\{f_j\}_{j=1}^{2^k}$ be an ETF constructed from $\mathbf{R}= \mathbf{I}+i\alpha \mathbf{C}$ as described in Corollary \ref{NEWGRASS},  where $\alpha=1/\sqrt{2^k-1}$ and $\mathbf{C}$ is a  skew-symmetric conference matrix defined recursively by (\ref{eqn13}). Fix $r$ to be a positive integer. Then for all diagonal partitions $\mathcal{P}=\{\mathcal{S}_1,\ldots,\mathcal{S}_r\}$ of the index set $\mathcal{I}=\{1,\ldots,2^k\}$ with $|S_h| \leq 2^{k-\lfloor \log_2 r \rfloor}$ for all $h\in \{1,\ldots, r\}$, the subset norms satisfy
\begin{equation}
    \left\lVert\sum_{j\in\mathcal{S}_h}f_j\otimes f_j\right\rVert\leq \frac{1}{2}+\frac{1}{\sqrt{2r}}
\end{equation}
for all $h\in\{1,\ldots,r\}$.
\end{theorem}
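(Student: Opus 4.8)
The plan is to convert the subset norm into the spectral norm of a principal submatrix of the Gram matrix and then exploit the self-reproducing block structure of the recursive conference matrix. First I would recall that the restricted frame operator $\sum_{j\in\mathcal{S}_h}f_j\otimes f_j=F_{\mathcal{S}_h}F_{\mathcal{S}_h}^{*}$ shares its nonzero spectrum with the principal submatrix $G_{\mathcal{S}_h}$ of the Gram matrix $G=F^{*}F$. By the construction in Corollary \ref{NEWGRASS}, $G=\tfrac12\mathbf{R}=\tfrac12(\mathbf{I}+i\alpha\mathbf{C})$ (so that $\lVert f_j\rVert^{2}=\tfrac12=\delta$ and $G$ is a projection, whence $\sum_j f_j\otimes f_j=\mathbf{I}$), and therefore $G_{\mathcal{S}_h}=\tfrac12(\mathbf{I}+i\alpha\mathbf{C}_{\mathcal{S}_h})$, where $\mathbf{C}_{\mathcal{S}_h}$ is the corresponding principal submatrix of $\mathbf{C}$. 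Since $\mathbf{C}$ is real and skew-symmetric, so is $\mathbf{C}_{\mathcal{S}_h}$, and $i\mathbf{C}_{\mathcal{S}_h}$ is Hermitian with eigenvalues real and symmetric about $0$; hence $G_{\mathcal{S}_h}$ is positive semidefinite with $\lVert G_{\mathcal{S}_h}\rVert=\tfrac12+\tfrac12\alpha\lVert\mathbf{C}_{\mathcal{S}_h}\rVert$. This reduces the theorem to the single estimate $\lVert\mathbf{C}_{\mathcal{S}_h}\rVert\le\sqrt{2^{k-p}-1}$, where $p=\lfloor\log_2 r\rfloor$.

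The heart of the argument is to bound $\lVert\mathbf{C}_{\mathcal{S}_h}\rVert$ using the recursion (\ref{eqn13}). I would first verify by induction on the recursion that every aligned diagonal sub-block of $\mathbf{C}$ of order $2^{k-p}$ is $\pm\mathbf{C}_{k-p}$, itself a skew-symmetric conference matrix of that order; consequently $i$ times such a block is Hermitian with every eigenvalue equal to $\pm\sqrt{2^{k-p}-1}$, since $\mathbf{C}_{k-p}^{2}=-(2^{k-p}-1)\mathbf{I}$. By the defining block structure of a diagonal partition together with the hypothesis $|\mathcal{S}_h|\le 2^{k-p}$, each subset $\mathcal{S}_h$ is confined to one such diagonal block $B$, so $i\mathbf{C}_{\mathcal{S}_h}$ is a principal submatrix of the Hermitian matrix $\pm i\mathbf{C}_{k-p}$. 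Applying the Cauchy interlacing theorem, every eigenvalue of $i\mathbf{C}_{\mathcal{S}_h}$ lies between the extreme eigenvalues of $\pm i\mathbf{C}_{k-p}$, that is, in $[-\sqrt{2^{k-p}-1},\sqrt{2^{k-p}-1}]$. Hence $\lVert\mathbf{C}_{\mathcal{S}_h}\rVert=\lVert i\mathbf{C}_{\mathcal{S}_h}\rVert\le\sqrt{2^{k-p}-1}$, which is exactly the needed bound; the worst case is the full diagonal block, and interlacing guarantees that proper subsets only do better.

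Combining the two reductions yields $\bigl\lVert\sum_{j\in\mathcal{S}_h}f_j\otimes f_j\bigr\rVert\le\tfrac12+\tfrac12\alpha\sqrt{2^{k-p}-1}=\tfrac12+\tfrac12\sqrt{(2^{k-p}-1)/(2^{k}-1)}$. The final step is the elementary inequality $\tfrac12\sqrt{(2^{k-p}-1)/(2^{k}-1)}\le 1/\sqrt{2r}$, equivalently $r(2^{k-p}-1)\le 2(2^{k}-1)$. Since $p=\lfloor\log_2 r\rfloor$ forces $r\le 2^{p+1}-1$, expanding $(2^{p+1}-1)(2^{k-p}-1)=2^{k+1}-2^{p+1}-2^{k-p}+1$ and using $2^{p+1}+2^{k-p}\ge 3$ closes the estimate.

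I expect the main obstacle to be the structural lemma in the second paragraph: identifying precisely which diagonal blocks of the recursively defined $\mathbf{C}$ are again conference matrices, and confirming that the combinatorial definition of a diagonal partition, under the cap $|\mathcal{S}_h|\le 2^{k-p}$, really does confine each $\mathcal{S}_h$ to a single such block so that Cauchy interlacing applies. Once that alignment is secured, the spectral reduction and the closing arithmetic are routine.
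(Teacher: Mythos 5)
Your proposal is correct, and it shares the paper's overall skeleton --- reduce the subset norm to the norm of a principal submatrix of the Gram matrix $\tfrac12\mathbf{R}$, exploit the recursion (\ref{eqn13}) to identify aligned diagonal blocks with smaller skew-symmetric conference matrices, and finish with elementary arithmetic in $r$ and $p=\lfloor\log_2 r\rfloor$ --- but your middle step is genuinely different. The paper never invokes interlacing: since each subset of a diagonal partition \emph{is} a full dyadic block $\mathcal{S}_{d_h,q_h}$ of (\ref{eqn25}), it computes the subset norm \emph{exactly} via Propositions \ref{RSUBS} and \ref{RSPEC}, obtaining the equality $\lVert\mathbf{F}_{\mathcal{S}_{d,q}}\rVert=\tfrac12+\tfrac12\sqrt{2^{k-d}-1}/\sqrt{2^k-1}$ of (\ref{eqn27}), then observes this is monotone in the block size, bounds the largest subset using $2^{d_{min}}\le r\le 2^{d_{min}+1}$, and closes with a chain of inequalities equivalent to your $r(2^{k-p}-1)\le 2(2^k-1)$. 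You instead bound every $\lVert\mathbf{C}_{\mathcal{S}_h}\rVert$ by Cauchy interlacing inside the depth-$p$ block $\pm\mathbf{C}(k-p)$, whose spectrum is flat at $\pm\sqrt{2^{k-p}-1}$ because $\mathbf{C}(k-p)^2=-(2^{k-p}-1)\mathbf{I}$. This buys two things: (i) generality --- your argument proves the bound for \emph{any} subsets each confined to an aligned dyadic block of order $2^{k-p}$, not only for full dyadic blocks, so it would survive a weakening of the definition of diagonal partition; and (ii) a cleaner reduction --- identifying the nonzero spectra of $F_{\mathcal{S}}F_{\mathcal{S}}^*$ and $F_{\mathcal{S}}^*F_{\mathcal{S}}$ directly avoids the linear-independence hypothesis that the paper carries through Lemmas \ref{le:norm-subset} and \ref{le:norm-sub-block}. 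What the paper's route buys in exchange is exact knowledge of each subset norm (equality rather than an upper bound in (\ref{eqn27})), which is what makes the sharpness comparison against the MSS bound in Table \ref{tab:my_label} meaningful. Your closing arithmetic, using $r\le 2^{p+1}-1$ and $2^{p+1}+2^{k-p}\ge 3$, is a valid and slightly more direct variant of the paper's (\ref{eqn41})--(\ref{eqn43}).
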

The rest of the paper is organized as follows.  In Section 2 we formally state definitions and theorems that are employed in our analysis. In Section 3 we derive the norm formulas for a particular Gram matrix called the $\mathbf{R}$ matrix. In Sections 4 and 5 we relate norms formed from subsets of our frame to the norms of sub-blocks of the $\mathbf{R}$ matrix. In Section 6 we prove Theorem \ref{JHL} our algorithm and describe our algorithm for constructing diagonal partitions.  Lastly, in Section 7 we prove that all partitions whose subsets have cardinalities less than four are automatically MSS partitions for any ETF.

%%%%%%%%%%%%%
\section{Preliminaries}
This section is organized as follows: first we shall describe the lexicon that is utilized in our proofs and analysis. Then in the second subsection we define frames and describe an important class called Grassmannian frames. The third and last subsection is dedicated to describing the construction of equiangular tight frames.
\subsection{Notation}
Throughout this article matrices will be denoted by bold capital letters, e.g.,  $\mathbf{M}\in\mathbb{R}^{n\times n}$ denotes an $n\times n$ matrix. Vectors are denoted with lower case letters, e.g., $x\in\mathbb{C}^{n}$ denotes an $n$-dimensional complex vector. The standard notation for transpose and conjugation will be utilized on matrices and vectors, e.g., $\overline{\mathbf{M}}$, $\mathbf{M}^T$, and $\mathbf{M}^*$ indicate the conjugation, transpose, and conjugate transpose of $\mathbf{M}$, respectively.

A set will be denoted with a calligraphic capital letter, e.g., $\mathcal{S}=\{1,2,3\}$, and $\lvert\mathcal{S}\rvert$ refers to its cardinality.  Given a set of vectors $\mathcal{V}=\{v_k\}_{k=1}^{n}$, we define $\mathcal{V}_{\mathcal{S}}=\{v_k\in\mathcal{V}:k\in\mathcal{S}\}$. For example, if $\mathcal{S}=\{1,2,3\}$, then  $\mathcal{V}_{\mathcal{S}}=\{v_1,v_2,v_3\}$.  A partition of $\mathcal{S}$ will be denoted by $\mathcal{P}=\{\mathcal{S}_1,\mathcal{S}_2,\ldots\mathcal{S}_k\}$.

Next, we define the Euclidean norm of a 
vector $x\in\mathbb{C}^n$ by
\begin{equation}\label{eqn2}
    \lVert x\rVert=\sqrt{x^*x}.
\end{equation}
The induced matrix (or operator) norm of a matrix $\mathbf{M} \in \mathbb{C}^{m\times n}$ is defined by 
\begin{equation}\label{eqn3}
    \lVert \mathbf{M}\rVert = \max_{\lVert x\rVert=1}\lVert\mathbf{M}x\rVert
\end{equation}
for $x\in\mathbb{C}^n$.
Recall that when $\mathbf{M}$ is a Hermitian positive semidefinite matrix, then $\lVert \mathbf{M}\rVert$ is given by the largest eigenvalue of $\mathbf{M}$.
Given two column vectors $x,y\in\mathbb{C}^n$, the inner product of $x$ and $y$ is defined to be
\[
\langle x,y\rangle=x^*y
\]
and the tensor (or outer) product is defined to be
\begin{equation}\label{eqn4}
    x\otimes y=xy^*.
\end{equation}
Observe that $xy^*\in\mathbb{C}^{n\times n}$.

\begin{definition}[Subset Norm]  Let $\mathcal{F}=\{f_1,\ldots,f_n\}$ be a set of vectors and $\mathcal{S}$ a subset of $[n]=\{1,\ldots n\}$.  We define the {\em subset outer product} of $\mathcal{S}$ to be matrix
\[
\mathbf{F}_\mathcal{S}=\sum_{j\in \mathcal{S}} f_j\otimes f_j
\]
and the {\em subset norm} of $\mathcal{S}$ to be the induced matrix norm $\lVert \mathbf{F}_\mathcal{S} \rVert$.
\end{definition}

\subsection{Frames} In this section we shall discuss important definitions involving general frames as described in \cite{CK,OLE1, OLE2, HKLW}.
\begin{definition}\label{FRAME}
A frame for a Hilbert space $\mathbb{H}$ is a sequence of vectors $\mathcal{F}=\{f_j\}\subset\mathbb{H}$ for which there exists constants $0<A\leq B<\infty$ such that for every vector $x\in\mathbb{H}$,
\begin{equation}\label{eqn7}
    A\lVert x\rVert^2\leq \sum_j|\langle x,f_j\rangle|^2\leq B\lVert x\rVert^2.
\end{equation}
\end{definition}
In this paper we shall assume our Hilbert space to be $\mathbb{R}^n$ or $\mathbb{C}^n$.  Moreover, we shall only be considering finite frames, and as a result of Definition \ref{FRAME} the frame vectors span the Hilbert space, i.e.,
\begin{equation}\label{eqn8}
    \mathrm{span} \ \mathcal{F}=\mathbb{R}^n \ \mathrm{or} \ \mathbb{C}^n.
\end{equation}

Next, we define the \textit{maximal frame correlation} of a frame as described in \cite{GRSS}.
\begin{definition}\label{MAXCORR}
For a given unit norm frame $\mathcal{F}=\{f_j\}_{j=1}^n$ in $\mathbb{H}^m$ we define the maximal frame correlation $\mathcal{M}(\mathcal{F})$ by
\begin{equation}\label{eqn9}
    \mathcal{M}(\mathcal{F})=\max_{j,k,k\neq j}\{| \langle f_j,f_k\rangle|\}.
\end{equation}
\end{definition}

We now give the formal definition of Grassmannian frames \cite{GRSS}.
\begin{definition}\label{GRASS}
A sequence of vectors $\{u_j\}_{j=1}^n$ in $\mathbb{H}^m$ is called Grassmannian if it is the solution to
\begin{equation}\label{eqn10}
    \min\{\mathcal{M}(\mathcal{F})\},
\end{equation}
where the minimum is taken over all unit norm frames $\mathcal{F}$ in $\mathbb{H}^m$.
\end{definition}

Strohmer and Heath \cite{GRSS} proved the following lower bound for $\mathcal{M}(\mathcal{F})$ and also proved that this bound is achieved for 
equiangular tight frames (ETFs), i.e,
 $\mathcal{F}=\{f_j\}_{j=1}^n$ where
\[
    |\langle f_j,f_l \rangle|^2= c \textrm{ for all }j,l\textrm{ with } j\neq l
\]
for some constant $c\geq 0$.

\begin{theorem}[\cite{GRSS}]
Let $\{f_k\}_{k=1}^n$ be a frame for $\mathbb{H}^m$. Then
\begin{equation}\label{eq:welch-bound}
     \mathcal{M}(\{f_k\}_{k=1}^n)\geq\sqrt{\frac{n-m}{m(n-1)}}
\end{equation}
and equality holds if and only if $\{f_k\}_{k=1}^n$ is an equiangular tight frame (ETF) with
\[
    |\langle f_j,f_l \rangle|^2= \frac{n-m}{m(n-1)}
\]
for all $1\leq j,l \leq n$ with $j\neq l$.  Furthermore, equality can only hold in (\ref{eq:welch-bound}) if $n\leq m(m+1)/2$ when $\mathbb{H}=\mathbb{R}$ and $n\leq m^2$ when $\mathbb{H}=\mathbb{C}$.
\end{theorem}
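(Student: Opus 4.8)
The plan is to derive the bound from two expressions for traces of powers of the frame operator $\mathbf{S}=\sum_{k=1}^n f_k\otimes f_k$, an $m\times m$ positive semidefinite matrix. First I would record the two trace identities $\mathrm{tr}(\mathbf{S})=\sum_{k=1}^n \lVert f_k\rVert^2 = n$ (using that the frame is unit norm) and $\mathrm{tr}(\mathbf{S}^2)=\sum_{j,k=1}^n |\langle f_j,f_k\rangle|^2$, the latter by expanding $\mathbf{S}^2=\sum_{j,k} (f_jf_j^*)(f_kf_k^*)$ and taking traces. Because the frame spans $\mathbb{H}^m$ by (\ref{eqn8}), $\mathbf{S}$ has full rank $m$ with strictly positive eigenvalues $\lambda_1,\dots,\lambda_m$.

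Next I would apply the power-mean (Cauchy--Schwarz) inequality $\sum_i \lambda_i^2 \ge (\sum_i \lambda_i)^2/m$ to obtain $\mathrm{tr}(\mathbf{S}^2)\ge (\mathrm{tr}\,\mathbf{S})^2/m = n^2/m$. Splitting the double sum in $\mathrm{tr}(\mathbf{S}^2)$ into its $n$ diagonal terms (each equal to $\lVert f_k\rVert^4 = 1$) and its $n(n-1)$ off-diagonal terms, and bounding each off-diagonal term above by $\mathcal{M}(\mathcal{F})^2$ via Definition \ref{MAXCORR}, gives $n^2/m \le n + n(n-1)\mathcal{M}(\mathcal{F})^2$. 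Solving this inequality for $\mathcal{M}(\mathcal{F})^2$ yields $\mathcal{M}(\mathcal{F})^2\ge (n-m)/(m(n-1))$, which is (\ref{eq:welch-bound}) after taking square roots.

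For the equality characterization I would trace back through the two inequalities. Equality in the power-mean step forces all eigenvalues of $\mathbf{S}$ to coincide, i.e. $\mathbf{S}=(n/m)\mathbf{I}$, which is precisely the statement that $\mathcal{F}$ is tight. Equality in the off-diagonal bound forces $|\langle f_j,f_l\rangle|^2=\mathcal{M}(\mathcal{F})^2$ for every $j\neq l$, which is the equiangular condition; combined with the value of the bound this gives $|\langle f_j,f_l\rangle|^2=(n-m)/(m(n-1))$. Conversely, if $\mathcal{F}$ is an ETF both inequalities are tight, so equality holds. Thus equality in (\ref{eq:welch-bound}) is equivalent to $\mathcal{F}$ being an ETF with the stated common squared angle.

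Finally, for the dimension bound I would pass to the rank-one projections $\mathbf{P}_k=f_k\otimes f_k$ and compute their Gram matrix in the Hilbert--Schmidt inner product, using $\langle \mathbf{P}_j,\mathbf{P}_k\rangle_{HS}=\mathrm{tr}(\mathbf{P}_j\mathbf{P}_k)=|\langle f_j,f_k\rangle|^2$. For an ETF this Gram matrix is $(1-c)\mathbf{I}+c\mathbf{J}$ with $c=(n-m)/(m(n-1))<1$ (for $m\ge 2$) and $\mathbf{J}$ the all-ones matrix, whose eigenvalues $1-c$ and $1-c+cn$ are all positive; hence the $\mathbf{P}_k$ are linearly independent. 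Since the $\mathbf{P}_k$ lie in the real vector space of symmetric matrices (dimension $m(m+1)/2$) when $\mathbb{H}=\mathbb{R}$, and of Hermitian matrices (real dimension $m^2$) when $\mathbb{H}=\mathbb{C}$, linear independence yields $n\le m(m+1)/2$ and $n\le m^2$ respectively. The main obstacle I anticipate is this last step: identifying the correct ambient space of matrices in each field and verifying the Gram matrix is nonsingular, since the whole dimension bound hinges on the positivity of its spectrum.
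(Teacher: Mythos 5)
Your proposal cannot be compared against an internal proof: the paper states this theorem as a quoted result of Strohmer and Heath \cite{GRSS} and gives no proof of it, so the assessment below is of your argument on its own merits. Your proof is correct and is essentially the classical argument. The trace identities $\mathrm{tr}(\mathbf{S})=n$ and $\mathrm{tr}(\mathbf{S}^2)=\sum_{j,k}|\langle f_j,f_k\rangle|^2$, combined with the power-mean inequality $\mathrm{tr}(\mathbf{S}^2)\ge(\mathrm{tr}\,\mathbf{S})^2/m$ and the bound $|\langle f_j,f_k\rangle|^2\le\mathcal{M}(\mathcal{F})^2$ on the $n(n-1)$ off-diagonal terms, yield the Welch bound, and your equality analysis correctly isolates the two sources of slack: equality in the power-mean step is exactly tightness ($\mathbf{S}=(n/m)\mathbf{I}$, using $\mathrm{tr}(\mathbf{S})=n$), and equality in the off-diagonal bound is exactly equiangularity, so equality in the Welch bound is equivalent to being an ETF with the stated common angle. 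The dimension bound via the Hilbert--Schmidt Gram matrix is also sound: $\mathrm{tr}(\mathbf{P}_j\mathbf{P}_k)=|\langle f_j,f_k\rangle|^2$, the ETF Gram matrix $(1-c)\mathbf{I}+c\mathbf{J}$ has eigenvalues $1-c$ and $1+c(n-1)$, both positive since $c=\frac{n-m}{m(n-1)}<1$ for $m\ge 2$, so the projections are linearly independent in the real vector space of symmetric matrices (dimension $m(m+1)/2$) or Hermitian matrices (real dimension $m^2$). Three small points you should make explicit, none of which is a gap: (i) the unit-norm hypothesis is not stated in the theorem but is implicit in Definition \ref{MAXCORR}, and you use it both in $\mathrm{tr}(\mathbf{S})=n$ and in setting the diagonal terms $\lVert f_k\rVert^4=1$; (ii) in the complex case one should observe that $\mathrm{tr}(\mathbf{P}_j\mathbf{P}_k)$ is real, so the linear-independence argument legitimately takes place over $\mathbb{R}$ inside the Hermitian matrices; (iii) full rank of $\mathbf{S}$ is not actually needed for the power-mean step, which holds for any list of $m$ real eigenvalues, so the appeal to the spanning property can be dropped. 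Your flagged caveat $m\ge 2$ is a genuine (degenerate) limitation, but it is a limitation of the theorem as stated, not of your proof.
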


Inequality (\ref{eq:welch-bound}) is known as the Welch bound. Thus, ETFs are exactly those Grassmannian frames which meet the Welch bound (also called optimal Grassmannian frames in the context of \cite{GRSS} if all the frame vectors have unit norm).

\subsection{Construction of Equiangular Tight Frames} 
Strohmer and Heath \cite{GRSS} give the following construction of ETFs.

\begin{corollary}[\cite{GRSS}]\label{COR24}
 Let $m,n\in\mathbb{N}$ with $n\geq m$. Assume $\mathbf{R}$ is a Hermitian $n\times n$ matrix in $\mathbb{H}$  with entries $\mathbf{R}_{k,k}=1$ and
\begin{equation}\label{eq:R-matrix}
    \mathbf{R}_{k,l}=\left\lbrace\begin{array}{cc}
          \pm\sqrt{\frac{n-m}{m(n-1)}} & \textit{ if } \mathbb{H}=\mathbb{R},  \\[4pt]
         \pm i \sqrt{\frac{n-m}{m(n-1)}} & \textit{ if } \mathbb{H}=\mathbb{C},
    \end{array}\right.
\end{equation}
for $k,l=1,\ldots,n$ with $k\neq l$. If the eigenvalues $\lambda_1,\ldots,\lambda_{n}$ of $\mathbf{R}$ are such that $\lambda_1=\ldots=\lambda_m=\frac{n}{m}$ and $\lambda_{m+1}=\ldots=\lambda_{n}=0$, then there exists a frame $\{f_k\}_{k=1}^n$ in $\mathbb{H}^{m}$ that achieves the Welch bound (\ref{eq:welch-bound}), i.e., $\mathcal{F}=\{f_k\}_{k=1}^n$ is a unit-norm ETF.
\end{corollary}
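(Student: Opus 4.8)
The plan is to recognize the matrix $\mathbf{R}$ as the Gram matrix of the sought-after frame and to recover the frame vectors by factoring it. First I would observe that the eigenvalue hypothesis forces $\mathbf{R}$ to be positive semidefinite of rank exactly $m$: since $n\geq m$ we have $n/m>0$, so the spectrum consisting of $n/m$ with multiplicity $m$ and $0$ with multiplicity $n-m$ is nonnegative with precisely $m$ nonzero entries. Because $\mathbf{R}$ is Hermitian, the spectral theorem yields a unitary $\mathbf{U}$ with $\mathbf{R}=\mathbf{U}\mathbf{\Lambda}\mathbf{U}^*$, where $\mathbf{\Lambda}=\mathrm{diag}(n/m,\ldots,n/m,0,\ldots,0)$. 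Truncating to the $m$ nonzero directions, I would set $\mathbf{F}=\mathbf{\Lambda}_m^{1/2}\mathbf{U}_m^*$, an $m\times n$ matrix (where $\mathbf{\Lambda}_m^{1/2}$ collects the square roots of the nonzero eigenvalues and $\mathbf{U}_m$ the corresponding columns of $\mathbf{U}$) whose columns $f_1,\ldots,f_n\in\mathbb{H}^m$ are the candidate frame vectors; by construction $\mathbf{F}^*\mathbf{F}=\mathbf{U}_m\mathbf{\Lambda}_m\mathbf{U}_m^*=\mathbf{R}$.

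Next I would read off the geometry of the $f_k$ directly from the entries of $\mathbf{R}$. Since $\mathbf{R}_{k,l}=f_k^*f_l=\langle f_k,f_l\rangle$, the diagonal condition $\mathbf{R}_{k,k}=1$ says $\lVert f_k\rVert^2=1$, so the frame is unit-norm, while (\ref{eq:R-matrix}) gives $|\langle f_k,f_l\rangle|^2=\frac{n-m}{m(n-1)}$ for all $k\neq l$, which is exactly the equiangularity condition with the Welch constant.

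It remains to verify tightness, namely that the frame operator $\mathbf{S}=\sum_{j=1}^n f_j\otimes f_j=\mathbf{F}\mathbf{F}^*$ equals $\frac{n}{m}\mathbf{I}_m$. Here I would invoke the standard fact that $\mathbf{F}\mathbf{F}^*$ (size $m\times m$) and $\mathbf{F}^*\mathbf{F}=\mathbf{R}$ (size $n\times n$) share the same nonzero eigenvalues with the same multiplicities. The nonzero spectrum of $\mathbf{R}$ consists of $m$ copies of $n/m$, so the $m\times m$ Hermitian matrix $\mathbf{F}\mathbf{F}^*$ has all $m$ of its eigenvalues equal to $n/m$; a Hermitian matrix with a single repeated eigenvalue is that scalar times the identity, hence $\mathbf{S}=\frac{n}{m}\mathbf{I}_m$. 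Consequently $\sum_j|\langle x,f_j\rangle|^2=x^*\mathbf{S}x=\frac{n}{m}\lVert x\rVert^2$ for every $x$, so $\mathcal{F}$ satisfies (\ref{eqn7}) with $A=B=n/m$ and is a tight frame.

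Finally, I would assemble these facts: $\mathcal{F}$ is a unit-norm tight frame that is equiangular with $|\langle f_j,f_l\rangle|^2=\frac{n-m}{m(n-1)}$, which is precisely the equality case of the preceding Welch bound theorem; hence $\mathcal{F}$ attains (\ref{eq:welch-bound}) and is a unit-norm ETF. I expect the only genuinely delicate point to be the tightness step. Unlike the unit-norm and equiangular conditions, which are immediate from the individual entries of $\mathbf{R}$, tightness is a global statement that cannot be seen entry-by-entry and must instead be extracted from the full spectral hypothesis through the shared-spectrum argument relating $\mathbf{F}\mathbf{F}^*$ and $\mathbf{F}^*\mathbf{F}$.
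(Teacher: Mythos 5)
Your proof is correct and follows essentially the same route as the paper: both factor $\mathbf{R}$ through its spectral decomposition so that it becomes the Gram matrix of $n$ vectors in $\mathbb{H}^m$ (the paper takes $f_j=\sqrt{n/m}\,\{\mathbf{W}_{j,l}\}_{l=1}^m$, i.e.\ scaled rows of the truncated eigenvector matrix, which is your $\mathbf{F}=\mathbf{\Lambda}_m^{1/2}\mathbf{U}_m^*$ up to conjugation), and then reads off unit norm and equiangularity from the entries of $\mathbf{R}$. Your tightness step via the shared nonzero spectrum of $\mathbf{F}\mathbf{F}^*$ and $\mathbf{F}^*\mathbf{F}$ is only a slight repackaging of the paper's direct computation $\sum_j f_j\otimes f_j=\frac{n}{m}\mathbf{I}_m$, which uses the orthonormality of the columns of $\mathbf{W}$.
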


To construct unit-norm ETFs from (\ref{eq:R-matrix}), Strohmer and Heath consider the spectral decomposition of $\mathbf{R}$:
\begin{equation*}
    \mathbf{R}= \mathbf{W\Lambda W}^*,
\end{equation*}
where $\mathbf{\Lambda}$ is the diagonal matrix consisting of eigenvalues of $\mathbf{R}$ and the columns of $\mathbf{W}$ are the eigenvectors of $\mathbf{R}$.
Then the first $m$ values on the diagonal of $\mathbf{\Lambda}$ consist of $2$'s and the remaining $n-m$ consist of $0$'s. Next, we define
\begin{equation} \label{eq:rescale}
    f_j=\sqrt{\frac{n}{m}}\{\mathbf{W}_{j,l}\}_{l=1}^m.
\end{equation}
In other words, $f_j$ consists of the first $m$ entries of the $j$th-row of $\mathbf{W}$. Since $\langle f_j,f_l\rangle=R_{l,j}$, this shows that $\mathcal{F}=\{f_j\}_{j=1}^n$ is a unit-norm ETF.

However, unit-norm ETFs do not meet the assumptions of the MSS theorem.  The following result,
\begin{align*}
    \mathbf{F}:=\sum_{j=1}^{n}f_j\otimes f_j&=\sum_{j=1}^{2^k}\left(\sqrt{\frac{n}{m}}\{\mathbf{W}_{j,l}\}_{l=1}^{m}\right)\left(\sqrt{\frac{n}{m}}\{\mathbf{W}_{j,l}\}_{l=1}^{m}\right)^*=\frac{n}{m}\mathbf{I}_m,
\end{align*}
shows that the frame vectors of $\mathcal{F}$ should not be scaled by a factor of $\sqrt{n/m}$ in (\ref{eq:rescale}) if the condition $\mathbf{F}=\mathbf{I}_m$ is to hold, where $\mathbf{I}_m$ is the $m\times m$ identity matrix.  We summarize this in the following corollary.

\begin{corollary}\label{NEWGRASS}
Let $\mathbf{R}$ be defined as in Corollary \ref{COR24}. Then the set of vectors $\mathcal{F}=\{f_l\}_{l=1}^{n}$, where
\begin{equation}\label{NEWFRAME}
    f_j=\{\mathbf{W}_{j,l}\}_{l=1}^m \in \mathbb{H}^m
\end{equation}
for $j\in\{1,\ldots,n\}$ and $l\in\{1,\ldots,m\}$,  is an ETF that satisfies the hypothesis of the MSS theorem, namely, $\mathbf{F}=\mathbf{I}_m$.
\end{corollary}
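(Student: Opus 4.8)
The plan is to establish $\mathbf{F}=\mathbf{I}_m$ directly, since the equiangular and tight properties will come essentially for free: comparing (\ref{NEWFRAME}) with (\ref{eq:rescale}), the new vector is $f_j=\sqrt{m/n}\,f_j^{\mathrm{old}}$, where $f_j^{\mathrm{old}}$ denotes the unit-norm vector of Corollary \ref{COR24}. A uniform positive rescaling preserves the constancy of $|\langle f_j,f_l\rangle|$, hence equiangularity, and scales the frame operator by a constant, hence preserves tightness. Thus the only substantive claim is the normalization $\mathbf{F}=\mathbf{I}_m$, and in fact the displayed computation preceding the statement already gives $\sum_j f_j^{\mathrm{old}}\otimes f_j^{\mathrm{old}}=\frac{n}{m}\mathbf{I}_m$, so that $\mathbf{F}=\frac{m}{n}\cdot\frac{n}{m}\mathbf{I}_m=\mathbf{I}_m$ follows at once.

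To make this self-contained I would first record the structure of $\mathbf{W}$. Writing the spectral decomposition $\mathbf{R}=\mathbf{W}\mathbf{\Lambda}\mathbf{W}^*$, the hypothesis $\lambda_1=\cdots=\lambda_m=n/m$ and $\lambda_{m+1}=\cdots=\lambda_n=0$ means only the first $m$ columns of $\mathbf{W}$ carry the nonzero spectrum, and by (\ref{NEWFRAME}) the vector $f_j\in\mathbb{H}^m$ has entries $(f_j)_l=\mathbf{W}_{j,l}$ for $l=1,\ldots,m$; that is, $f_j$ is the truncation of the $j$th row of $\mathbf{W}$ to its first $m$ coordinates.

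Next I would evaluate the frame operator entrywise. By (\ref{eqn4}), the $(p,q)$ entry of $f_j\otimes f_j=f_jf_j^*$ is $\mathbf{W}_{j,p}\overline{\mathbf{W}_{j,q}}$, so for $p,q\in\{1,\ldots,m\}$,
\[
(\mathbf{F})_{p,q}=\sum_{j=1}^n \mathbf{W}_{j,p}\overline{\mathbf{W}_{j,q}}=\overline{\sum_{j=1}^n \overline{\mathbf{W}_{j,p}}\,\mathbf{W}_{j,q}}=\overline{(\mathbf{W}^*\mathbf{W})_{p,q}}=\overline{\delta_{p,q}}=\delta_{p,q},
\]
where the third equality is the orthonormality of the columns of the unitary matrix $\mathbf{W}$. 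This yields $\mathbf{F}=\mathbf{I}_m$, which is exactly the hypothesis (\ref{MSSHYP}) of the MSS theorem.

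The argument has essentially no obstacle: the key identity is nothing more than the statement that the columns of $\mathbf{W}$ are orthonormal, and dropping the factor $\sqrt{n/m}$ from (\ref{eq:rescale}) is precisely what rescales the frame operator from $\frac{n}{m}\mathbf{I}_m$ down to $\mathbf{I}_m$. The only points demanding care are the bookkeeping with complex conjugates in the entrywise sum and the observation, made in the first paragraph, that equiangularity and tightness are invariant under the uniform scaling, so that $\mathcal{F}$ remains an ETF while now meeting the MSS normalization.
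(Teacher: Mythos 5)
Your proposal is correct and follows essentially the same route as the paper: the paper's displayed computation immediately preceding the corollary is exactly your entrywise orthonormality argument (i.e., $\mathbf{W}^*\mathbf{W}=\mathbf{I}$ gives $\sum_j f_j^{\mathrm{old}}\otimes f_j^{\mathrm{old}}=\tfrac{n}{m}\mathbf{I}_m$ for the scaled vectors), and the corollary is then obtained precisely by dropping the factor $\sqrt{n/m}$. Your additional remark that uniform rescaling preserves equiangularity and tightness is a point the paper leaves implicit, but it is not a different method, just a slightly more complete write-up.
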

As a result of this corollary, we have
\begin{equation}\label{INNERPROD}
 f_j^* f_l=\frac{m}{n}\mathbf{R}_{j,l}=\left\lbrace\begin{array}{cc}
          \frac{m}{n} & j=l;  \\[4pt]
         \pm c\frac{m}{n}{\sqrt{\frac{n-m}{m(n-1)}}} & j\neq l,
    \end{array}\right.
\end{equation}
where $c=1$ if $\mathbb{H}=\mathbb{R}$ and $c=i$ if $\mathbb{H}=\mathbb{C}$.

Next, we review Strohmer and Heath's construction of $\mathbf{R}$ via conference matrices by assuming that it takes the form
\begin{equation}\label{eqn14}
    \mathbf{R}=\mathbf{I}+i\alpha\mathbf{C},
\end{equation}
where $\alpha$ is a constant and $\mathbf{C}$ is conference matrix, defined by Geothals and Seidel \cite{CONF} as follows.
\begin{definition}\label{CMATRIX}
A matrix $\mathbf{C}$ of order $n$ with diagonal elements $0$ and off diagonal elements $\pm 1$ satisfying
\begin{equation}\label{eqn11}
    \mathbf{CC}^T=(n-1)\mathbf{I}
\end{equation}
is said to be a conference matrix.
\end{definition}

%%%%%%%%%%%%%
\section{Spectrum of the $\mathbf{R}$-matrix}
In this section we shall find the spectrum and the norm of the $\mathbf{R}$-matrix, but first we need to find the spectrum and the norm of conference matrices.
\begin{proposition}
The norm of a conference matrix $\mathbf{C}$ of order $n$ is given by
\begin{equation}\label{CONFNORM}
    \lVert\mathbf{C}\rVert=\sqrt{n-1}.
\end{equation}
\end{proposition}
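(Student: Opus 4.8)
The plan is to recognize that $\lVert \mathbf{C}\rVert$ is the operator norm of a real matrix, so it equals the largest singular value of $\mathbf{C}$, i.e., the square root of the largest eigenvalue of $\mathbf{C}^T\mathbf{C}$. Since $\mathbf{C}$ has real entries, $\mathbf{C}^*=\mathbf{C}^T$, and the defining relation (\ref{eqn11}) already hands us $\mathbf{C}\mathbf{C}^T=(n-1)\mathbf{I}$. The only gap is that (\ref{eqn3}) applied to $\mathbf{C}$ produces $\lVert\mathbf{C}x\rVert^2=x^T\mathbf{C}^T\mathbf{C}x$, which involves $\mathbf{C}^T\mathbf{C}$ rather than $\mathbf{C}\mathbf{C}^T$, so the first real step is to pass from one product to the other.

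To do this I would note that (\ref{eqn11}) exhibits $\tfrac{1}{n-1}\mathbf{C}^T$ as a right inverse of the square matrix $\mathbf{C}$ (valid since $n>1$), and for square matrices a right inverse is automatically a two-sided inverse. Hence $\mathbf{C}^{-1}=\tfrac{1}{n-1}\mathbf{C}^T$, and multiplying (\ref{eqn11}) on the left by $\mathbf{C}^{-1}$ and on the right by $\mathbf{C}$ gives $\mathbf{C}^T\mathbf{C}=(n-1)\mathbf{I}$ as well.

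With $\mathbf{C}^T\mathbf{C}=(n-1)\mathbf{I}$ in hand, the norm computation is immediate: for any unit vector $x$,
\[
\lVert \mathbf{C}x\rVert^2 = x^T\mathbf{C}^T\mathbf{C}x = (n-1)\,x^Tx = (n-1),
\]
so by the definition (\ref{eqn3}) the maximum over $\lVert x\rVert=1$ yields $\lVert\mathbf{C}\rVert^2=n-1$, whence $\lVert\mathbf{C}\rVert=\sqrt{n-1}$. Equivalently, one can observe that $\mathbf{C}^T\mathbf{C}=(n-1)\mathbf{I}$ is Hermitian positive semidefinite with its single eigenvalue $n-1$, and invoke the remark after (\ref{eqn3}) that the norm of such a matrix is its largest eigenvalue.

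Since every step is forced by the defining identity, I do not anticipate a genuine obstacle; the only point requiring a moment of care is the transition from $\mathbf{C}\mathbf{C}^T$ to $\mathbf{C}^T\mathbf{C}$, which rests on $\mathbf{C}$ being square and invertible. If one prefers to avoid invoking invertibility explicitly, an alternative is to argue directly that the nonzero eigenvalues of $\mathbf{C}\mathbf{C}^T$ and $\mathbf{C}^T\mathbf{C}$ coincide, so that $\lambda_{\max}(\mathbf{C}^T\mathbf{C})=\lambda_{\max}(\mathbf{C}\mathbf{C}^T)=n-1$ regardless.
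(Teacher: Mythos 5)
Your proof is correct and is essentially the paper's own argument: both compute $\lVert\mathbf{C}x\rVert^2 = x^T\mathbf{C}^T\mathbf{C}x = n-1$ directly from the defining relation and take the maximum over unit vectors. In fact you are slightly more careful than the paper, which silently replaces $\mathbf{C}\mathbf{C}^T=(n-1)\mathbf{I}$ by $\mathbf{C}^T\mathbf{C}=(n-1)\mathbf{I}$ without comment, whereas you justify that step (via the right-inverse argument, or the equality of nonzero eigenvalues of $\mathbf{C}\mathbf{C}^T$ and $\mathbf{C}^T\mathbf{C}$).
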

\begin{proof}
This follows easily from (\ref{eqn11}): 
\begin{align*}
    \lVert \mathbf{C}\rVert&=\max_{\lVert x\rVert=1}\lVert \mathbf{C}x\rVert=\max_{\lVert x\rVert=1}\sqrt{\lVert \mathbf{C}x\rVert^2}=\max_{\lVert x\rVert=1}\sqrt{x^T\mathbf{C}^T\mathbf{C}x}=\sqrt{n-1}.
\end{align*}
\end{proof}
Next we shall determine the spectrum of our conference matrices in the case where $\mathbf{C}$ is skew-symmetric, i.e., $\mathbf{C}=-\mathbf{C}^T$.  One recursive method of constructing such matrices of order $n=2^k$ is described in \cite{GRSS} as follows. 
We begin with 
\begin{equation*}
    \mathbf{C}(1)=
    \begin{bmatrix}
    0 & -1 \\ 1 & 0 
    \end{bmatrix}
\end{equation*}
and define $\mathbf{C}(k)$ recursively by
\begin{equation}\label{eqn13}
    \mathbf{C}(k)=
    \begin{bmatrix}
     \mathbf{C}(k-1) & \mathbf{C}(k-1)-\mathbf{I}_{2^{k-1}} \\
     \mathbf{C}(k-1)+\mathbf{I}_{2^{k-1}} & -\mathbf{C}(k-1)
    \end{bmatrix}.
\end{equation}
It is straightforward to prove by induction that $\mathbf{C}(k)$ is in fact a skew-symmetric conference matrix.

Conference matrices are special cases of Seidel matrices where their entries come from $\{0,\pm 1\}$.  Greaves \cite{GREAVES} proves the following theorem regarding the characteristic polynomial of Seidel matrices.
\begin{theorem}[\cite{GREAVES}]\label{SEIDEL}
Let $\mathbf{S}$ be a Seidel matrix and $\chi_\mathbf{S}(x)$ be its characteristic polynomial. Then
\begin{equation}\label{CHAR}
\chi_\mathbf{S}(x)=(x^2+4t+3)^{2t+2}\Leftrightarrow (4t+3)\mathbf{I}+\mathbf{S}^2=\mathbf{0}, 
\end{equation}
where $\mathbf{0}$ is a matrix of zeros.
\end{theorem}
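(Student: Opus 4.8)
The plan is to prove the two implications separately, exploiting two structural features of the matrix $\mathbf{S}$ under consideration: it is \emph{normal} — indeed, for the skew-symmetric conference matrices at hand one has $\mathbf{S}\mathbf{S}^{T}=\mathbf{S}^{T}\mathbf{S}=-\mathbf{S}^{2}$ — so it is unitarily diagonalizable over $\mathbb{C}$; and it has zero diagonal, so $\operatorname{tr}\mathbf{S}=0$. Throughout I would set $p(x)=x^{2}+4t+3$ and write its two roots as $\pm\mu$, where $\mu^{2}=-(4t+3)$; since $4t+3\neq 0$ these roots are distinct (and purely imaginary, matching the spectrum of a skew-symmetric matrix). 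I would also record that $\deg\big((x^{2}+4t+3)^{2t+2}\big)=4t+4$, so the standing order of $\mathbf{S}$ is $n=4t+4$.

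For the implication $(\Leftarrow)$, suppose $\mathbf{S}^{2}+(4t+3)\mathbf{I}=\mathbf{0}$, i.e. $p(\mathbf{S})=\mathbf{0}$. Then the minimal polynomial of $\mathbf{S}$ divides $p$, and because $p$ has the two distinct roots $\pm\mu$, every eigenvalue of $\mathbf{S}$ lies in $\{\mu,-\mu\}$. Letting $a$ and $b$ denote the respective multiplicities, one has $\chi_{\mathbf{S}}(x)=(x-\mu)^{a}(x+\mu)^{b}$ with $a+b=n$. The zero-trace condition gives $a\mu-b\mu=0$, hence $a=b$; combined with $a+b=n=4t+4$ this forces $a=b=2t+2$. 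Therefore $\chi_{\mathbf{S}}(x)=\big((x-\mu)(x+\mu)\big)^{2t+2}=(x^{2}-\mu^{2})^{2t+2}=(x^{2}+4t+3)^{2t+2}$, as claimed.

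For the implication $(\Rightarrow)$, suppose $\chi_{\mathbf{S}}(x)=(x^{2}+4t+3)^{2t+2}$. The distinct roots of this polynomial are exactly $\pm\mu$. Because $\mathbf{S}$ is normal it is diagonalizable over $\mathbb{C}$, so its minimal polynomial is squarefree, i.e. it equals the product of the distinct linear factors of $\chi_{\mathbf{S}}$, namely $(x-\mu)(x+\mu)=x^{2}+4t+3$. Evaluating the minimal polynomial at $\mathbf{S}$ then yields $\mathbf{S}^{2}+(4t+3)\mathbf{I}=\mathbf{0}$.

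I expect the forward implication to be the one real obstacle, since it requires passing from the characteristic polynomial to the annihilating identity. For an arbitrary matrix this step fails: a matrix with characteristic polynomial $(x^{2}+4t+3)^{2t+2}$ could carry nontrivial Jordan blocks and then satisfy only $\big(\mathbf{S}^{2}+(4t+3)\mathbf{I}\big)^{s}=\mathbf{0}$ for some $s>1$. It is precisely the normality of a conference/Seidel matrix that guarantees diagonalizability and collapses the minimal polynomial to its squarefree part; this is the one place where the hypothesis that $\mathbf{S}$ is genuinely a Seidel matrix, rather than a generic zero-trace matrix, is essential, while the zero-trace property does the analogous work of splitting the multiplicities evenly in the reverse direction.
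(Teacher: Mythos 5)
The paper offers no proof of Theorem \ref{SEIDEL} for you to be compared against: it is stated as a quoted result of Greaves--Suda \cite{GREAVES} and used as a black box. Judged on its own, your argument is correct and self-contained, and it is an elementary linear-algebra proof of exactly the kind the paper omits. The backward direction (an annihilating polynomial $p(x)=x^2+4t+3$ confines the spectrum to $\{\pm\mu\}$, and zero trace forces the two algebraic multiplicities to be equal) and the forward direction (normality of a real symmetric or skew-symmetric matrix gives unitary diagonalizability, hence a squarefree minimal polynomial, hence $p(\mathbf{S})=\mathbf{0}$) are both sound, and you correctly isolate diagonalizability as the step that fails for a general matrix with nontrivial Jordan structure.

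One step should be tightened. You fix the order $n=4t+4$ by reading it off the degree of $(x^2+4t+3)^{2t+2}$, i.e., off the left-hand side of the equivalence; in the direction $(\Leftarrow)$ that polynomial identity is the conclusion, not the hypothesis, so as written the step is mildly circular. The order should instead be extracted from the hypothesis $(4t+3)\mathbf{I}+\mathbf{S}^2=\mathbf{0}$ itself, and this is where the $\pm 1$ off-diagonal entries of a Seidel matrix are genuinely used --- not merely its normality and zero trace, as your closing remark suggests. Concretely, for skew-symmetric $\mathbf{S}$ one has $(\mathbf{S}^2)_{ii}=-\sum_{k}\mathbf{S}_{ik}^2=-(n-1)$, so the hypothesis forces $n-1=4t+3$, i.e., $n=4t+4$, after which your multiplicity count $a=b=2t+2$ goes through. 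Without the $\pm 1$ structure the exponent cannot be pinned down: rescaling a skew conference matrix of some other even order $n'$ by the factor $\sqrt{(4t+3)/(n'-1)}$ yields a zero-diagonal, zero-trace, normal matrix satisfying $\mathbf{S}^2=-(4t+3)\mathbf{I}$ whose characteristic polynomial is $(x^2+4t+3)^{n'/2}$ rather than $(x^2+4t+3)^{2t+2}$. With that one-line repair, your proof stands as a complete replacement for the external citation.
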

With this theorem at hand, we have the following result, which we provide a proof for completeness since we were unable to a find proof of it in the literature.

\begin{proposition}\label{CONFSPEC}
The spectrum $\sigma(\mathbf{C})$ of a skew-symmetric conference matrix $\mathbf{C}$ of order $n$, where $n$ is an even positive integer, is given by 
\begin{equation*}
    \sigma(\mathbf{C})=\{\lambda_1,\lambda_2\}=\{\pm i\sqrt{n-1}\}
\end{equation*}
and each spectral value has multiplicity $n/2$.
\end{proposition}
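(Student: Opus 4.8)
The plan is to exploit the interaction between skew-symmetry and the defining conference identity to pin down $\mathbf{C}^2$ exactly, and then read off the spectrum. First I would combine $\mathbf{C}^T=-\mathbf{C}$ with the conference relation $\mathbf{C}\mathbf{C}^T=(n-1)\mathbf{I}$ from (\ref{eqn11}) to obtain
\[
\mathbf{C}^2=-\mathbf{C}\mathbf{C}^T=-(n-1)\mathbf{I},
\]
equivalently $(n-1)\mathbf{I}+\mathbf{C}^2=\mathbf{0}$. This single identity does essentially all of the work.

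From here there are two routes. The quickest uses Theorem \ref{SEIDEL}: a conference matrix is a Seidel matrix, so setting $4t+3=n-1$, i.e. $t=(n-4)/4$, the right-hand equivalence in (\ref{CHAR}) is exactly the identity just derived. The theorem then yields $\chi_\mathbf{C}(x)=(x^2+(n-1))^{n/2}$, since $2t+2=n/2$. The roots of $x^2+(n-1)$ are $\pm i\sqrt{n-1}$, and the exponent $n/2$ assigns each the claimed multiplicity.

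A self-contained route, which avoids having to check whether $t$ is an admissible parameter for Theorem \ref{SEIDEL}, proceeds as follows. The identity $\mathbf{C}^2=-(n-1)\mathbf{I}$ shows that the minimal polynomial of $\mathbf{C}$ divides $x^2+(n-1)$, so every eigenvalue $\lambda$ satisfies $\lambda^2=-(n-1)$; hence $\sigma(\mathbf{C})\subseteq\{i\sqrt{n-1},\,-i\sqrt{n-1}\}$. Because $\mathbf{C}$ is a real matrix, its characteristic polynomial has real coefficients, so the nonreal eigenvalues $i\sqrt{n-1}$ and $-i\sqrt{n-1}$ occur with equal algebraic multiplicity $d$. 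Since these multiplicities sum to the order $n$, we get $2d=n$, i.e. $d=n/2$, which simultaneously shows that each value genuinely occurs and that its multiplicity is $n/2$.

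The only place requiring real care is the even splitting of multiplicities: I would justify that $i\sqrt{n-1}$ and $-i\sqrt{n-1}$ share the same multiplicity by appealing to the conjugate symmetry of the spectrum of a real matrix, and note that the resulting relation $2d=n$ both forces $d$ to be the integer $n/2$ (consistent with the hypothesis that $n$ is even) and rules out either eigenvalue being absent. I would present the self-contained route as the main proof, since it works for every even $n$ — including the base case $n=2$, where $t=(n-4)/4$ is negative and Theorem \ref{SEIDEL} would not directly apply — and mention the Greaves computation as a one-line alternative valid when $n\equiv 0\pmod 4$.
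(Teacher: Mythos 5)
Your proposal is correct, and your preferred (self-contained) route is genuinely different from the paper's. The paper's proof is exactly your first route: it derives $(n-1)\mathbf{I}+\mathbf{C}^2=\mathbf{0}$ from skew-symmetry and (\ref{eqn11}), then invokes Theorem \ref{SEIDEL} with $4t+3=n-1$ and $2t+2=n/2$ to read off $\chi_{\mathbf{C}}(x)=(x^2+n-1)^{n/2}$. Your main argument replaces the appeal to Greaves's theorem by two elementary facts: the identity $\mathbf{C}^2=-(n-1)\mathbf{I}$ forces the minimal polynomial to divide $x^2+(n-1)$, so $\sigma(\mathbf{C})\subseteq\{\pm i\sqrt{n-1}\}$; and since $\mathbf{C}$ is real, the nonreal eigenvalues $i\sqrt{n-1}$ and $-i\sqrt{n-1}$ have equal algebraic multiplicity $d$, whence $2d=n$ pins down $d=n/2$ and simultaneously shows neither eigenvalue is absent. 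What your route buys is precisely the point you flag: Theorem \ref{SEIDEL}, read with $t$ a nonnegative integer, does not cover $n=2$ (where $t=(n-4)/4=-1/2$), yet the paper needs the proposition for order-$2$ matrices --- the recursion (\ref{eqn13}) starts from $\mathbf{C}(1)$ of order $2$, and the diagonal sub-blocks $\mathbf{R}_{d,q}(k)$ at depth $d=k-1$ have order $2$ --- so your self-contained argument closes a small gap in the paper's proof rather than merely restyling it. The trade-off is minor: the paper's route is a one-line citation that hands you the full characteristic polynomial at once, while yours needs the (easy) conjugate-pair counting step; both are valid, and your version is the more general of the two.
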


\begin{proof}
It follows from skew-symmetry that
\begin{align*}
    (n-1)\mathbf{I}+\mathbf{C}^2
    &=(n-1)\mathbf{I}-\mathbf{CC}^T=(n-1)\mathbf{I}-(n-1)\mathbf{I}=\mathbf{0}.
\end{align*}
It follows from Theorem \ref{SEIDEL} that the characteristic polynomial of $\mathbf{C}$ is given by (\ref{CHAR}) with $4t+3=n-1$ and $2t+2=n/2$, or equivalently,
\begin{equation*}
    (x^2+n-1)^{n/2}=0.
\end{equation*}
The solution set to this equation is $\{\pm i\sqrt{n-1}\}$ and each solution has multiplicity $n/2$. Thus,
\begin{equation*}
    \sigma(\mathbf{C})=\{\lambda_1,\lambda_2\}=\{\pm i\sqrt{n-1}\}
\end{equation*}
and each spectral value has multiplicity $n/2$.
\end{proof}

Proposition \ref{CONFSPEC} now allows us to compute the spectrum of $\mathbf{R}$.

\begin{proposition}\label{RSPEC}
The spectrum of the matrix $\mathbf{R}=\mathbf{I}+i\alpha\mathbf{C}$, where $\mathbf{C}$ is a skew symmetric conference matrix of even order $n$ and $\alpha$ is a real constant, is given by
\begin{equation}\label{rspecEQN}
    \sigma(\mathbf{R})=\{1\pm\alpha\sqrt{n-1}\},
\end{equation}
where each spectral value has multiplicity $n/2$.
\end{proposition}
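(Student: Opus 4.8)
The plan is to derive this spectrum directly from Proposition \ref{CONFSPEC} by a spectral mapping argument, since $\mathbf{R}$ is an affine function of $\mathbf{C}$. First I would recall that Proposition \ref{CONFSPEC} gives $\sigma(\mathbf{C}) = \{\pm i\sqrt{n-1}\}$, with each eigenvalue occurring with multiplicity $n/2$. The key observation is that if $v$ is any eigenvector of $\mathbf{C}$ with $\mathbf{C}v = \mu v$, then
\[
\mathbf{R}v = (\mathbf{I} + i\alpha\mathbf{C})v = v + i\alpha\mu v = (1 + i\alpha\mu)v,
\]
so $v$ is also an eigenvector of $\mathbf{R}$ with eigenvalue $1 + i\alpha\mu$. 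Thus the eigenvectors of $\mathbf{C}$ and $\mathbf{R}$ coincide and the spectrum of $\mathbf{R}$ is obtained by applying the map $\mu \mapsto 1 + i\alpha\mu$ to $\sigma(\mathbf{C})$.

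Next I would carry out the substitution. Taking $\mu = i\sqrt{n-1}$ gives $1 + i\alpha(i\sqrt{n-1}) = 1 - \alpha\sqrt{n-1}$, using $i^2 = -1$; taking $\mu = -i\sqrt{n-1}$ gives $1 + \alpha\sqrt{n-1}$. Hence $\sigma(\mathbf{R}) = \{1 \pm \alpha\sqrt{n-1}\}$, exactly as claimed.

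The one point requiring care — and the only thing beyond a routine substitution — is verifying that the multiplicities are preserved, i.e.\ that each eigenvalue of $\mathbf{R}$ retains multiplicity $n/2$. This follows because $\mathbf{C}$ is diagonalizable: from the identity $(n-1)\mathbf{I} + \mathbf{C}^2 = \mathbf{0}$ established in the proof of Proposition \ref{CONFSPEC}, the minimal polynomial of $\mathbf{C}$ divides $x^2 + (n-1)$, which has distinct roots, so $\mathbf{C}$ has a full eigenbasis. (Equivalently, a real skew-symmetric matrix is normal, hence unitarily diagonalizable.) Since $\mathbf{R}$ shares this eigenbasis, the eigenspace of $\mathbf{R}$ for $1 - \alpha\sqrt{n-1}$ is precisely the eigenspace of $\mathbf{C}$ for $i\sqrt{n-1}$ (of dimension $n/2$), and similarly for $1 + \alpha\sqrt{n-1}$. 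Therefore each spectral value of $\mathbf{R}$ has multiplicity $n/2$, completing the argument. I expect no genuine obstacle here; the entire statement reduces to the spectral mapping identity above together with the diagonalizability already implicit in Proposition \ref{CONFSPEC}.
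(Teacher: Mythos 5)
Your proof is correct and takes essentially the same approach as the paper: both reduce $\sigma(\mathbf{R})$ to $\sigma(\mathbf{C})$ from Proposition \ref{CONFSPEC} via the affine map $\mu \mapsto 1+i\alpha\mu$. The only difference is cosmetic --- the paper carries out the substitution $\gamma=\lambda-1$ inside the characteristic polynomial, which tracks algebraic multiplicities automatically, whereas you argue with shared eigenvectors and invoke the diagonalizability of $\mathbf{C}$ to preserve multiplicities; both are valid.
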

\begin{proof}
To find the spectrum of $\mathbf{R}=\mathbf{I}+i\alpha\mathbf{C}$ we shall solve the following characteristic equation:
\begin{align*}
    \text{det}( \mathbf{R}-\lambda\mathbf{I})=\text{det}( i\alpha\mathbf{C}-(\lambda-1)\mathbf{I})=0.
\end{align*}
But observe that if we make the substitution $\gamma=\lambda-1$, then our characteristic equation becomes
\begin{equation*}
    \text{det}( i\alpha\mathbf{C}-\gamma\mathbf{I}) = 0.
\end{equation*}
which is the characteristic equation for $i\alpha\mathbf{C}$.  It follows from Proposition \ref{CONFSPEC} that
\begin{equation*}
    \sigma(i\alpha\mathbf{C})=i\alpha\cdot\sigma(\mathbf{C})=\{\pm\alpha\sqrt{n-1}\}.
\end{equation*}
Hence, $\sigma(\mathbf{R})=\{1\pm\alpha\sqrt{n-1}\}$.
\end{proof}

Knowing the spectrum of $\mathbf{R}$ allows us to immediately obtain the norm $\mathbf{R}$.
\begin{proposition}\label{pr:norm-R}
The norm of the matrix $\mathbf{R}=\mathbf{I}+i\alpha\mathbf{C}$, where $\mathbf{C}$ is a $n\times n$ skew symmetric conference matrix of order $n=2^k$ and $\alpha$ is a real constant, is given by
\[
\lVert \mathbf{R}\rVert=1+\lvert\alpha\rvert\sqrt{n-1}.
\]
\end{proposition}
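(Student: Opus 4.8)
The plan is to exploit the fact that $\mathbf{R}$ is Hermitian and then read its norm directly off the spectrum already computed in Proposition \ref{RSPEC}. First I would verify that $\mathbf{R}=\mathbf{I}+i\alpha\mathbf{C}$ is Hermitian. Since $\mathbf{C}$ is a real skew-symmetric matrix, we have $\mathbf{C}^*=\overline{\mathbf{C}}^T=\mathbf{C}^T=-\mathbf{C}$, so that $(i\alpha\mathbf{C})^*=\overline{i\alpha}\,\mathbf{C}^*=(-i\alpha)(-\mathbf{C})=i\alpha\mathbf{C}$. Adding the identity preserves this, giving $\mathbf{R}^*=\mathbf{R}$. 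In particular all eigenvalues of $\mathbf{R}$ are real, consistent with $\sigma(\mathbf{R})=\{1\pm\alpha\sqrt{n-1}\}$ from Proposition \ref{RSPEC}.

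Next I would invoke the fact that for a Hermitian (indeed any normal) matrix the induced operator norm coincides with the spectral radius, i.e.\ with the largest modulus among its eigenvalues, $\lVert\mathbf{R}\rVert=\max_{\lambda\in\sigma(\mathbf{R})}|\lambda|$. Here lies the one point requiring care: the preliminaries only record this identity for Hermitian \emph{positive semidefinite} matrices, where the norm equals the largest eigenvalue. Since $\mathbf{R}$ need not be positive semidefinite---the eigenvalue $1-\alpha\sqrt{n-1}$ is negative whenever $|\alpha|\sqrt{n-1}>1$---I would instead use the general Hermitian statement above, which accounts for the possibility of negative eigenvalues. This is the only step that is not purely mechanical.

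It then remains to evaluate $\max\{\,|1+\alpha\sqrt{n-1}|,\,|1-\alpha\sqrt{n-1}|\,\}$. Writing $a=\alpha\sqrt{n-1}$, the elementary identity $\max\{|1+a|,|1-a|\}=1+|a|$, verified by splitting into the cases $a\ge 0$ and $a<0$, immediately yields $\lVert\mathbf{R}\rVert=1+|\alpha|\sqrt{n-1}$, as claimed. I do not anticipate any genuine obstacle beyond correctly invoking the Hermitian norm identity in its full generality rather than the positive semidefinite special case.
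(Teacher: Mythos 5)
Your proof is correct and takes essentially the same route as the paper, which states the proposition as an immediate consequence of the spectrum computed in Proposition \ref{RSPEC}. Your added care in invoking the norm--spectral-radius identity for general Hermitian matrices (rather than only the positive semidefinite case recorded in the preliminaries) and in handling the sign of $\alpha$ via $\max\{\lvert 1+a\rvert,\lvert 1-a\rvert\}=1+\lvert a\rvert$ simply fills in details the paper leaves implicit.
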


Lastly, observe that if we set
\begin{equation}\label{eqn15}
    \alpha=\sqrt{\frac{n-m}{m(n-1)}},
\end{equation}
then it follows from Proposition \ref{RSPEC} that $\mathbf{R}=\mathbf{I}+i\alpha \mathbf{C}$, where $\mathbf{C}$ is defined by (\ref{eqn13}) with $n=2m$, satisfies Corollary \ref{COR24}.  Thus, Corollary \ref{NEWGRASS} yields an ETF $\mathcal{F}$ over $\mathbb{C}^m$ that satisfies the hypothesis of the MSS theorem.

%%%%%%%%%%%%%%
\section{Analysis of $\mathbf{R}$-Matrix Sub-Blocks}
In this section we shall examine the sub-blocks of the $\mathbf{R}$-matrix that are along its main diagonal.  We assume that $\mathbf{R}(k)=\mathbf{I}+i\alpha\mathbf{C}(k)$ where $\mathbf{C}(k)$ is a recursively defined skew-symmetric conference matrix of order $2^k$ defined by (\ref{eqn13}).  Then $\mathbf{R}(k)$ can be expressed in block form
\begin{equation}\label{eqn21}
    \mathbf{R}(k)    =\begin{bmatrix}
    \mathbf{I}+i\alpha\mathbf{C}(k-1) & \star \\[8pt]
    \star & \mathbf{I}-i\alpha\mathbf{C}(k-1)
    \end{bmatrix}=\begin{bmatrix}
    \mathbf{R}(k-1) & \star \\[8pt]
    \star & \overline{\mathbf{R}}(k-1)
    \end{bmatrix}.
\end{equation}

Notice that we can apply this recursion again to get
\begin{equation*}
    \mathbf{R}(k)=\begin{bmatrix}
    \mathbf{R}_{2,1}(k) & \star & \star & \star \\[8pt]
    \star & \mathbf{R}_{2,2}(k) & \star & \star \\[8pt]
    \star & \star & \mathbf{R}_{2,3}(k) & \star \\[8pt]
    \star & \star & \star & \mathbf{R}_{2,4}(k)
    \end{bmatrix},
\end{equation*}
where
$\mathbf{R}_{2,q}(k)=\mathbf{R}(k-2)$ for $q=1,4$ and $\mathbf{R}_{2,q}(k)=\overline{\mathbf{R}}(k-2)$ for $q=2,3$.
We shall refer to the sub-blocks $\mathbf{R}_{2,q}(k)$ as \textit{recursive diagonal sub-blocks} of $\mathbf{R}(k)$ at depth 2. This generalizes to the following result for sub-blocks $\mathbf{R}_{d,q}(k)$ at depth $d$.
\begin{proposition}\label{RSUBS}
At depth $d\in\{1,\ldots,k\}$ , we have
\begin{equation}\label{eqn22}
    \mathbf{R}_{d,q}(k)=\mathbf{R}(k-d) \text{ or } \mathbf{R}_{d,q}(k)=\overline{\mathbf{R}}(k-d).
\end{equation}
for $q=1,\ldots,2^d$.
\end{proposition}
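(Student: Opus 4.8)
The plan is to proceed by induction on the depth $d$, proving the slightly sharper bookkeeping statement that at each depth every diagonal sub-block lies in the two-element set $\{\mathbf{R}(k-d), \overline{\mathbf{R}}(k-d)\}$. The base case $d=1$ is precisely the block decomposition (\ref{eqn21}), which exhibits the two depth-one diagonal sub-blocks as $\mathbf{R}(k-1)$ and $\overline{\mathbf{R}}(k-1)$.

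For the inductive step, the essential observation is a closure property under one further application of the recursion. Since $\mathbf{C}(j)$ is a real matrix, we have $\overline{\mathbf{R}}(j)=\mathbf{I}-i\alpha\mathbf{C}(j)$, so conjugating (\ref{eqn21}) shows that the top-left and bottom-right diagonal blocks of $\overline{\mathbf{R}}(j)$ are $\overline{\mathbf{R}}(j-1)$ and $\mathbf{R}(j-1)$, respectively. Combined with (\ref{eqn21}) itself, this says that whenever a diagonal block equals $\mathbf{R}(j)$ or $\overline{\mathbf{R}}(j)$, subdividing it along its own diagonal produces two blocks each lying in $\{\mathbf{R}(j-1), \overline{\mathbf{R}}(j-1)\}$.

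Now suppose the claim holds at depth $d$, so every $\mathbf{R}_{d,q}(k)$ equals $\mathbf{R}(k-d)$ or $\overline{\mathbf{R}}(k-d)$. The depth-$(d+1)$ sub-blocks $\mathbf{R}_{d+1,q}(k)$ are obtained by decomposing each depth-$d$ block into its own two diagonal halves. Applying the closure property with $j=k-d$ shows each resulting block lies in $\{\mathbf{R}(k-d-1), \overline{\mathbf{R}}(k-d-1)\}=\{\mathbf{R}(k-(d+1)), \overline{\mathbf{R}}(k-(d+1))\}$, while the count doubles from $2^d$ to $2^{d+1}$, matching the stated index range $q=1,\ldots,2^{d+1}$. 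This closes the induction.

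I do not anticipate a genuine obstacle here; the entire content is the self-similar block structure of $\mathbf{C}(k)$. The only points requiring care are bookkeeping: one must confirm that conjugation commutes with extracting diagonal blocks (immediate since $\mathbf{C}$ is real) and that ``depth $d+1$'' is correctly defined as the diagonal refinement of every depth-$d$ block rather than of $\mathbf{R}(k)$ directly. Making the two-element-set formulation explicit keeps the induction hypothesis strong enough to feed itself and avoids having to track which specific blocks are conjugated.
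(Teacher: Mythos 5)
Your proof is correct and follows essentially the same route as the paper: induction on the depth $d$, with the base case read off from the block decomposition (\ref{eqn21}) and the inductive step given by refining each depth-$d$ block along its own diagonal. In fact your argument is slightly more complete than the paper's, since you explicitly verify the closure property for conjugated blocks (that the diagonal blocks of $\overline{\mathbf{R}}(j)$ are $\overline{\mathbf{R}}(j-1)$ and $\mathbf{R}(j-1)$, using that $\mathbf{C}(j)$ is real), a step the paper's inductive argument asserts without proof.
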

\begin{proof}
We shall prove (\ref{eqn22}) by induction.

\noindent Base case: If $d=1$, then it is clear from (\ref{eqn21}) that $\mathbf{R}_{1,q}(k)=\mathbf{R}(k-1)$ or $\mathbf{R}_{1,q}(k)=\overline{\mathbf{R}}(k-1)$.

\noindent Inductive step: Let $b\in\{1,\ldots,k-1\}$ be given and suppose (\ref{eqn22}) is true for $d=b$ so that
\begin{equation*}
    \mathbf{R}(k)=\begin{bmatrix}
    \mathbf{R}_{b,1}(k) & \star & \cdots & \\[8pt]
    \star & \overline{\mathbf{R}}_{b,2}(k) &\cdots & \\[8pt]
    \vdots &\vdots &\ddots  
    \end{bmatrix},
\end{equation*}
where $\mathbf{R}_{b,q}=\mathbf{R}(k-b)$ or $\mathbf{R}_{b,q}=\overline{\mathbf{R}}(k-b)$ for $q=1,\ldots,2^b$.  But each sub-block $\mathbf{R}_{b,q}$ can be partitioned into smaller sub-blocks of the form 
\begin{equation}
    \mathbf{R}_{b,q}(k)=
    \begin{bmatrix}
    \mathbf{R}_{b+1,q'}(k) & \star \\[8pt]
    \star & \mathbf{R}_{b+1,q''}(k)
    \end{bmatrix}.
\end{equation}
Thus we have $\mathbf{R}(k)=$
\begin{equation*}
\small
    \begin{bmatrix}
    \mathbf{R}_{b+1,1}(k) & \star & \star & \star & \cdots & \\[8pt]
    \star & \mathbf{R}_{b+1,2}(k) & \star & \star & \cdots & \\[8pt]
    \star & \star & \mathbf{R}_{b+1,3}(k) & \star & \cdots  & \\[8pt]
    \star & \star & \star & \mathbf{R}_{b+1,4}(k) & \cdots  & \\[8pt]
    \vdots & \vdots & \vdots & \vdots & \ddots &
    \end{bmatrix},
\end{equation*}
where there are twice as many diagonal sub-blocks compared to depth $b$. Moreover,
\begin{align*}
    \mathbf{R}_{b+1,q}(k)=\mathbf{R}(k-(b+1))\text{ or }\mathbf{R}_{b+1,q}(k)=\overline{\mathbf{R}}(k-(b+1));
\end{align*}
This completes the proof.
\end{proof}

The following lemma gives the norm of our sub-blocks $\mathbf{R}_{d,q}$, which follows immediately from Proposition \ref{pr:norm-R}.
\begin{lemma}\label{THM42}
Let $\mathbf{R}(k)=\mathbf{I}+i\alpha\mathbf{C}(k)$ where $\mathbf{C}(k)$ is a recursively defined skew-symmetric conference matrix of order $2^k$ and $\alpha=1/\sqrt{2^k-1}$. Then the norm of each recursive diagonal sub-block $\mathbf{R}_{d,q}(k)$ at depth $d$ is 
\begin{equation}
    \lVert \mathbf{R}_{d,q}(k)\rVert=1+\alpha\sqrt{2^{k-d}-1}.
\end{equation}
\end{lemma}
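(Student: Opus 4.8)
The plan is to obtain this directly by feeding the structural description of the sub-blocks from Proposition \ref{RSUBS} into the norm formula of Proposition \ref{pr:norm-R}. First I would invoke Proposition \ref{RSUBS}, which asserts that every recursive diagonal sub-block $\mathbf{R}_{d,q}(k)$ at depth $d$ equals either $\mathbf{R}(k-d)$ or its entrywise conjugate $\overline{\mathbf{R}}(k-d)$. This immediately reduces the computation to just these two matrices, with no dependence on the position index $q$ other than through this dichotomy.

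Next I would unwind the definitions: $\mathbf{R}(k-d) = \mathbf{I} + i\alpha\mathbf{C}(k-d)$, where $\mathbf{C}(k-d)$ is the skew-symmetric conference matrix of order $2^{k-d}$ produced by the recursion (\ref{eqn13}) at level $k-d$, and the constant $\alpha$ is the same one carried down through the block decomposition (\ref{eqn21}). Applying Proposition \ref{pr:norm-R} with $n = 2^{k-d}$ then gives $\lVert \mathbf{R}(k-d)\rVert = 1 + |\alpha|\sqrt{2^{k-d}-1}$; since $\alpha = 1/\sqrt{2^k-1} > 0$ we have $|\alpha| = \alpha$, which is exactly the asserted value.

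The one point deserving explicit mention is the conjugate sub-blocks. Here I would observe that $\overline{\mathbf{R}}(k-d) = \mathbf{I} - i\alpha\mathbf{C}(k-d)$ is again of the form $\mathbf{I} + i\beta\mathbf{C}(k-d)$ with $\beta = -\alpha$ a real constant, so Proposition \ref{pr:norm-R} applies verbatim and returns $1 + |\beta|\sqrt{2^{k-d}-1} = 1 + \alpha\sqrt{2^{k-d}-1}$, the identical value. Equivalently, the operator norm is unchanged under entrywise conjugation, since $\overline{\mathbf{M}}^*\overline{\mathbf{M}} = \overline{\mathbf{M}^*\mathbf{M}}$ shares its real, nonnegative spectrum with $\mathbf{M}^*\mathbf{M}$. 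Both cases thus yield the same norm, establishing the lemma. There is no genuine obstacle: the statement is a bookkeeping corollary of the two cited propositions, and the only thing requiring care is confirming that the conjugation appearing in Proposition \ref{RSUBS} leaves the norm invariant.
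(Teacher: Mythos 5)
Your proposal is correct and follows essentially the same route as the paper, which simply notes that the lemma follows immediately from Proposition \ref{pr:norm-R} applied to the sub-blocks identified in Proposition \ref{RSUBS}. Your explicit treatment of the conjugate case $\overline{\mathbf{R}}(k-d) = \mathbf{I} + i(-\alpha)\mathbf{C}(k-d)$ fills in a detail the paper leaves implicit, and both justifications you give for it (reapplying the proposition with $\beta = -\alpha$, or invariance of the operator norm under entrywise conjugation) are sound.
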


\par
It remains to characterize the positions of the entries of $\mathbf{R}(k)$ that appear in the recursive sub-blocks $\mathbf{R}_{d,q}(k)$. This is given by the following lemma, which follows easily from the recursive definition of our sub-blocks. 

\begin{lemma}
Let $\mathbf{R}(k)=\mathbf{I}+i\alpha\mathbf{C}(k)$ where $\mathbf{C}(k)$ is a skew-symmetric conference matrix of order $2^k$ defined by (\ref{eqn13}).  Then the entries of the $q$-th diagonal sub-block $\mathbf{R}_{d,q}(k)$ at recursive depth $d$ are given by $(\mathbf{R}_{d,q}(k))_{g,h}=f_h^* f_g$, where $(g,h)\in\mathcal{S}_{d,q}\times\mathcal{S}_{d,q}$ %of the matrix $\mathbf{R}(k)$ % 
and $S_{d,q}$ is defined by
\begin{equation}\label{eqn25}
    \mathcal{S}_{d,q}=\{(q-1)\cdot2^{k-d}+1,\ldots, q\cdot2^{k-d}\},
\end{equation}
for $q\in\{1,\ldots,2^d\}$ and $d\in\{0,\ldots,k-1\}$.
\end{lemma}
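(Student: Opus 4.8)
The plan is to separate the lemma into two parts: identifying which rows and columns of $\mathbf{R}(k)$ are occupied by the block $\mathbf{R}_{d,q}(k)$ (the combinatorial content, which carries all the difficulty), and reading off the entry values once the index set is known (which is immediate). For the entrywise part, I would invoke the Gram-matrix relation $\langle f_j,f_l\rangle=\mathbf{R}_{l,j}$ noted in Section~2 following~(\ref{eq:rescale}): taking $j=h$ and $l=g$ gives $(\mathbf{R}(k))_{g,h}=\langle f_h,f_g\rangle=f_h^* f_g$. Thus, as soon as we know that $\mathbf{R}_{d,q}(k)$ occupies precisely the rows and columns indexed by $\mathcal{S}_{d,q}$, we obtain $(\mathbf{R}_{d,q}(k))_{g,h}=f_h^* f_g$ for all $(g,h)\in\mathcal{S}_{d,q}\times\mathcal{S}_{d,q}$. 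A helpful simplification here is that reading the entries directly off the global matrix $\mathbf{R}(k)$ means we need not track whether a given block equals $\mathbf{R}(k-d)$ or $\overline{\mathbf{R}}(k-d)$ (Proposition~\ref{RSUBS}); that dichotomy matters for the norm computation in Lemma~\ref{THM42} but is irrelevant to the entry formula, since the inner-product relation already accounts for the correct (conjugated) values at each global position.

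The substantive step is therefore to show that the index set occupied by $\mathbf{R}_{d,q}(k)$ is exactly $\mathcal{S}_{d,q}$ as defined in~(\ref{eqn25}), and I would prove this by induction on the depth $d$, following the recursion already exhibited in the proof of Proposition~\ref{RSUBS}. The base case $d=0$ is trivial, since $\mathbf{R}_{0,1}(k)=\mathbf{R}(k)$ occupies all of $\{1,\dots,2^k\}=\mathcal{S}_{0,1}$; the case $d=1$ follows directly from the block form~(\ref{eqn21}), where the top-left child $\mathbf{R}(k-1)$ sits on $\{1,\dots,2^{k-1}\}=\mathcal{S}_{1,1}$ and the bottom-right child $\overline{\mathbf{R}}(k-1)$ on $\{2^{k-1}+1,\dots,2^k\}=\mathcal{S}_{1,2}$.

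For the inductive step, I would assume that at depth $b$ the block $\mathbf{R}_{b,q}(k)$ occupies the contiguous interval $\mathcal{S}_{b,q}=\{(q-1)2^{k-b}+1,\dots,q\,2^{k-b}\}$. Applying~(\ref{eqn21}) to this $2^{k-b}\times 2^{k-b}$ block (itself an $\mathbf{R}(k-b)$ or $\overline{\mathbf{R}}(k-b)$) splits it into a top-left and a bottom-right child of size $2^{k-b-1}$, which occupy the first and second halves of the interval $\mathcal{S}_{b,q}$. The remaining work is the arithmetic check that these halves are exactly $\mathcal{S}_{b+1,2q-1}$ and $\mathcal{S}_{b+1,2q}$: one verifies that $(2q-2)2^{k-b-1}+1=(q-1)2^{k-b}+1$ is the left endpoint of the first half, that its right endpoint $(2q-1)2^{k-b-1}$ equals the midpoint $(q-1)2^{k-b}+2^{k-b-1}$ of $\mathcal{S}_{b,q}$, and analogously for the second half. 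Re-indexing the $2^{b+1}$ children at depth $b+1$ then closes the induction and yields the claimed range $d\in\{0,\dots,k-1\}$.

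The main obstacle is not conceptual but bookkeeping: keeping the local indices of each recursive child consistent with the global indices of $\mathbf{R}(k)$ throughout the recursion, and confirming that repeated dyadic bisection of $\{1,\dots,2^k\}$ reproduces exactly the intervals in~(\ref{eqn25}). What makes this go through cleanly is that the recursion~(\ref{eqn13})--(\ref{eqn21}) always leaves the two diagonal children contiguous and in increasing order, so no reordering of indices ever occurs and the consecutive-interval structure of $\mathcal{S}_{d,q}$ is preserved at every level; pinning this down precisely is the only real content of the argument.
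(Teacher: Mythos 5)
Your proposal is correct and takes essentially the same approach as the paper: the paper gives no written proof of this lemma, saying only that it ``follows easily from the recursive definition of our sub-blocks,'' and your induction on the depth $d$ (splitting each depth-$b$ block into contiguous dyadic halves and verifying the endpoint arithmetic), combined with reading entry values off the global Gram relation, is exactly that argument spelled out. One caveat, inherited from the paper rather than introduced by you: the relation $\langle f_j,f_l\rangle=\mathbf{R}_{l,j}$ you cite holds for the unit-norm frame of Section 2, whereas for the rescaled frame of Corollary \ref{NEWGRASS} used throughout Sections 4--6 the correct relation is (\ref{INNERPROD}), which carries a factor $m/n$ (compare Lemma \ref{le:norm-sub-block}); the lemma's literal statement shares this same ambiguity.
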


%%%%%%%%%%%%%%
\section{Subsets of $[2^k]$}
Let $\mathcal{F}=\{f_j\}_{j=1}^n$ be an ETF derived from Corollary \ref{NEWGRASS} and let $\mathcal{I}=[n]=\{1,\ldots,n\}$ be the index set for $\mathcal{F}$. Suppose $\mathcal{S}=\{j_r\}_{r=1}^q\subset\mathcal{I}$ is a $q$-element subset of $\mathcal{I}$ and $\mathcal{F}_S=\{f_{j_r}\}_{r=1}^q$ a linearly independent set of frame vectors.  As before, we define
\begin{align*}
    \mathbf{F}_{\mathcal{S}}&=\sum_{i\in\mathcal{S}}f_i\otimes f_i=\sum_{r=1}^q f_{j_r} f_{j_r}^*.
\end{align*}
To find the norm $\lVert \mathbf{F}_{\mathcal{S}}\rVert$, we shall solve the eigenvalue problem
\begin{equation}\label{eq:eigenvalue-problem}
    \mathbf{F}_{\mathcal{S}}v=\lambda v,
\end{equation}
where $v\in \mathbb{C}^m$.  Assume $\lambda\neq 0$.  Then since $\mathbf{F}_{\mathcal{S}} v \in \mathrm{span}(\mathcal{F}_{\mathcal{S}})$ and $\mathcal{F}_{\mathcal{S}}$ is a linearly independent set, we may assume that the $v \in \mathrm{span}(\mathcal{F}_{\mathcal{S}})$, i.e., 
\begin{equation*}
    v=\sum_{r=1}^q c_{j_r}f_{j_r}
\end{equation*}
for some set of coefficients $\{c_1,\ldots,c_q\}$.
Equation (\ref{eq:eigenvalue-problem}) becomes
\begin{align*}
    \left(\sum_{r=1}^q f_{j_r} f_{j_r}^*\right)\left(\sum_{r=1}^q c_{j_r}f_{j_r}\right) &=\lambda\sum_{r=1}^q c_{j_r}f_{j_r}\\[8pt]
    \Rightarrow \sum_{r=1}^q\left(\sum_{h=1}^q c_{j_h}f_{j_r}^*f_{j_h} \right)f_{j_r} &=\lambda\sum_{r=1}^q c_{j_r}f_{j_r}.
\end{align*}
Then equating coefficients yields $q$ equations in $q$ unknowns $c_1,\ldots,c_q$:
\begin{equation*}
    \sum_{h=1}^q c_{j_h}f_{j_r}^*f_{j_h}=\lambda c_{j_r}, \ \ r=1,\ldots, q,
\end{equation*}
which we express in matrix form:
\begin{equation} \label{eq:gram-matrix}
    \begin{bmatrix}
    f_{j_1}^*f_{j_1} & f_{j_1}^*f_{j_2} & f_{j_1}^*f_{j_3} &\cdots & f_{j_1}^*f_{j_q} \\[4pt]
    f_{j_2}^*f_{j_1} & f_{j_2}^*f_{j_2} & f_{j_2}^*f_{j_3} &\cdots & f_{j_2}^*f_{j_q} \\[4pt]
    f_{j_3}^*f_{j_1} & f_{j_3}^*f_{j_2} & f_{j_3}^*f_{j_3} &\cdots & f_{j_3}^*f_{j_q} \\[4pt]
    \vdots & \vdots & \vdots & \ddots & \vdots \\[4pt]
    f_{j_q}^*f_{j_1} & f_{j_q}^*f_{j_2} & f_{j_q}^*f_{j_3} &\cdots & f_{j_q}^*f_{j_q}
    \end{bmatrix}
    \begin{bmatrix}
    c_{j_1} \\[4pt]
    c_{j_2} \\[4pt]
    c_{j_3} \\[4pt]
    \vdots \\
    c_{j_q}
    \end{bmatrix}=\lambda \begin{bmatrix}
    c_{j_1} \\[4pt]
    c_{j_2} \\[4pt]
    c_{j_3} \\[4pt]
    \vdots \\
    c_{j_q}
    \end{bmatrix}.
\end{equation}
Notice that if we view $\mathcal{F}_\mathcal{S}$ as a matrix whose columns are its frame vectors, then the matrix on the left hand side of (\ref{eq:gram-matrix}) is the Gram matrix $\mathcal{F}_{\mathcal{S}}^*\mathcal{F}_{\mathcal{S}}$ whose entries are defined by (\ref{INNERPROD}). In fact, this Gram matrix is a principal sub-matrix of $\mathbf{R}$. Let us denote by $\mathbf{R}_S$ the principal sub-matrix of $\mathbf{R}$ whose row and column indices come from $\mathcal{S}$ and denote by $c_{\mathcal{S}}$ the column vector whose entries are the coefficients $\{c_{j_r}\}_{r=1}^q$. Then $\frac{m}{n}\mathbf{R}_S=\mathcal{F}_S^*\mathcal{F}_S$ and equation (\ref{eq:gram-matrix}) becomes
\begin{equation} \label{eq:eigenvalue-R}
    \frac{m}{n}\mathbf{R}_{\mathcal{S}}c_{\mathcal{S}}=\lambda c_\mathcal{S}.
\end{equation}

Since the eigenvalue problems (\ref{eq:eigenvalue-problem}) and (\ref{eq:eigenvalue-R}) are equivalent for all nonzero $\lambda$,  it follows that $\sigma(\mathbf{F}_{\mathcal{S}})=\sigma(\mathbf{R}_{\mathcal{S}}) \cup \{0\}$. This proves the following lemma.

\begin{lemma} \label{le:norm-subset}
Let $\mathcal{F}=\{f_j\}_{j=1}^{2^k}$ be an ETF defined by Corollary \ref{NEWGRASS} and $\mathcal{S}\subset\mathcal{I}$ an arbitrary subset with $\mathcal{F}_{\mathcal{S}}$ a linearly independent set of frame vectors.
Then $\sigma(\mathbf{F}_{\mathcal{S}})=\sigma(\mathbf{R}_{\mathcal{S}})\cup \{0\}$ and thus
\begin{equation*}
    \lVert\mathbf{F}_\mathcal{S}\rVert=\frac{m}{n}\lVert\mathbf{R}_\mathcal{S}\rVert,
\end{equation*}
where the entries of $\mathbf{R}_\mathcal{S}$ correspond to the entries $(g,h)\in\mathcal{S}\times\mathcal{S}$ of $\mathbf{R}$. 
\end{lemma}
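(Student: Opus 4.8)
The plan is to confirm rigorously the reduction carried out in the computation just above and then read off both conclusions. The conceptually cleanest framing is to set $\mathbf{A}=\mathcal{F}_{\mathcal{S}}$, the $m\times q$ matrix whose columns are $f_{j_1},\ldots,f_{j_q}$, so that $\mathbf{F}_{\mathcal{S}}=\mathbf{A}\mathbf{A}^*$ while, by (\ref{INNERPROD}), $\mathbf{A}^*\mathbf{A}=\mathcal{F}_{\mathcal{S}}^*\mathcal{F}_{\mathcal{S}}=\frac{m}{n}\mathbf{R}_{\mathcal{S}}$. The entire lemma is then an instance of the standard fact that $\mathbf{A}\mathbf{A}^*$ and $\mathbf{A}^*\mathbf{A}$ share the same nonzero eigenvalues with identical multiplicities; I would use this viewpoint to guarantee the multiplicity bookkeeping, while presenting the explicit reduction to match the coordinate computation above.

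First I would justify the reduction directly. Since $\mathrm{range}(\mathbf{F}_{\mathcal{S}})\subseteq\mathrm{span}(\mathcal{F}_{\mathcal{S}})$, any eigenvector $v$ for a nonzero eigenvalue $\lambda$ satisfies $v=\lambda^{-1}\mathbf{F}_{\mathcal{S}}v\in\mathrm{span}(\mathcal{F}_{\mathcal{S}})$, so I may write $v=\sum_r c_{j_r}f_{j_r}$. Substituting into (\ref{eq:eigenvalue-problem}) and equating the coefficients of the $f_{j_r}$ — which is legitimate precisely because $\mathcal{F}_{\mathcal{S}}$ is linearly independent, so this representation is unique — yields the reduced problem (\ref{eq:eigenvalue-R}), namely $\frac{m}{n}\mathbf{R}_{\mathcal{S}}c_{\mathcal{S}}=\lambda c_{\mathcal{S}}$. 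The key technical point, and the one I expect to demand the most care, is that this correspondence is a \emph{bijection} between the nonzero spectrum of $\mathbf{F}_{\mathcal{S}}$ and that of $\frac{m}{n}\mathbf{R}_{\mathcal{S}}$, multiplicities included: the map $c_{\mathcal{S}}\mapsto\sum_r c_{j_r}f_{j_r}$ is injective by linear independence, so it carries each eigenspace of the reduced $q\times q$ problem isomorphically onto the corresponding eigenspace of $\mathbf{F}_{\mathcal{S}}$, while conversely every nonzero eigenvector of $\mathbf{F}_{\mathcal{S}}$ lies in $\mathrm{span}(\mathcal{F}_{\mathcal{S}})$ and hence pulls back. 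This is exactly the place where the hypothesis of linear independence is essential, and it is what the $\mathbf{A}\mathbf{A}^*$ versus $\mathbf{A}^*\mathbf{A}$ formulation makes automatic.

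From here the conclusions follow quickly. Because $\mathcal{F}_{\mathcal{S}}$ is linearly independent, the Gram matrix $\frac{m}{n}\mathbf{R}_{\mathcal{S}}$ is positive definite and so has no zero eigenvalue; hence the nonzero eigenvalues of $\mathbf{F}_{\mathcal{S}}$ are exactly the eigenvalues of $\frac{m}{n}\mathbf{R}_{\mathcal{S}}$, and $\sigma(\mathbf{F}_{\mathcal{S}})$ is this set together with $0$, the latter accounting for the $(m-q)$-dimensional kernel of $\mathbf{F}_{\mathcal{S}}$ when $q<m$. This is the asserted spectral identity up to the common scale factor $m/n$. Finally, since $\mathbf{F}_{\mathcal{S}}$ is Hermitian and positive semidefinite, its norm equals its largest eigenvalue; as the nonzero eigenvalues are $\frac{m}{n}$ times those of the positive definite matrix $\mathbf{R}_{\mathcal{S}}$, the maximum is attained on a nonzero eigenvalue, giving $\lVert\mathbf{F}_{\mathcal{S}}\rVert=\frac{m}{n}\lVert\mathbf{R}_{\mathcal{S}}\rVert$. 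I do not foresee any genuine obstacle beyond the multiplicity-preserving correspondence noted above.
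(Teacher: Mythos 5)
Your proposal is correct and follows essentially the same route as the paper: the paper's proof is exactly the coordinate reduction displayed just before the lemma, where an eigenvector for a nonzero eigenvalue is written as $v=\sum_r c_{j_r}f_{j_r}$ and equating coefficients (legitimate by linear independence) converts (\ref{eq:eigenvalue-problem}) into the Gram-matrix problem (\ref{eq:eigenvalue-R}). Your additional framing via the standard fact that $\mathbf{A}\mathbf{A}^*$ and $\mathbf{A}^*\mathbf{A}$ share nonzero eigenvalues with multiplicities is a welcome tightening — the paper asserts the equivalence of the two eigenvalue problems without spelling out the eigenspace bijection — and you are also right that the displayed spectral identity should carry the factor $m/n$ (i.e., $\sigma(\mathbf{F}_{\mathcal{S}})=\frac{m}{n}\sigma(\mathbf{R}_{\mathcal{S}})\cup\{0\}$), a scaling the paper's statement elides but its norm conclusion implicitly uses.
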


Now that we know how the subset norm of $\mathcal{F}_\mathcal{S}$  relates to the norm of the corresponding sub-block $\mathbf{R}_\mathcal{S}$ of $\mathbf{R}$, let us next consider frames $\mathcal{F}$ defined by $\mathbf{R}=\mathbf{I}+i\alpha\mathbf{C}$, where $\mathbf{C}$ is a $n\times n$ skew symmetric conference matrix of order $n=2m=2^k$ and $\alpha=1/\sqrt{2^k-1}$.  Moreover, we restrict our attention to subsets that correspond to diagonal sub-blocks of $\mathbf{R}$.  In particular, we have the following result.

\begin{lemma} \label{le:norm-sub-block}
Suppose $\mathcal{S}_{d,q}\subset\mathcal{I}$, where $\mathcal{S}_{d,q}$ is defined by (\ref{eqn25}). Then
\begin{equation}\label{eq:norm-sub-block}
\lVert\mathbf{F}_{\mathcal{S}_{d,q}}\rVert=\frac{1}{2}\lVert\mathbf{R}_{d,q}(k)\rVert.
\end{equation}
\end{lemma}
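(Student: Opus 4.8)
The plan is to combine the block-decomposition results of Section 4 with the spectral identity of Lemma \ref{le:norm-subset}. First I would observe that the index set $\mathcal{S}_{d,q}$ given by (\ref{eqn25}) is precisely the consecutive run of $2^{k-d}$ rows and columns occupied by the recursive diagonal sub-block $\mathbf{R}_{d,q}(k)$; hence the principal submatrix $\mathbf{R}_{\mathcal{S}_{d,q}}$ of $\mathbf{R}(k)$ extracted on those indices coincides exactly with $\mathbf{R}_{d,q}(k)$. This identification is already implicit in Proposition \ref{RSUBS} together with the lemma characterizing the entries $(\mathbf{R}_{d,q}(k))_{g,h}=f_h^* f_g$, so it requires no new computation.

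Next I would apply Lemma \ref{le:norm-subset} with $\mathcal{S}=\mathcal{S}_{d,q}$, which gives $\lVert \mathbf{F}_{\mathcal{S}_{d,q}}\rVert=\frac{m}{n}\lVert \mathbf{R}_{\mathcal{S}_{d,q}}\rVert$. Since the frame is built from $\mathbf{R}=\mathbf{I}+i\alpha\mathbf{C}$ with $\mathbf{C}$ of order $n=2^k$ and $m=n/2=2^{k-1}$, we have $m/n=1/2$, and substituting the identification from the previous step yields $\lVert \mathbf{F}_{\mathcal{S}_{d,q}}\rVert=\frac{1}{2}\lVert \mathbf{R}_{d,q}(k)\rVert$, which is exactly (\ref{eq:norm-sub-block}).

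The one hypothesis that must be checked before invoking Lemma \ref{le:norm-subset} is that $\mathcal{F}_{\mathcal{S}_{d,q}}$ is a linearly independent set, and this is where essentially all the work lies. By Proposition \ref{RSUBS}, $\mathbf{R}_{d,q}(k)$ equals $\mathbf{R}(k-d)$ or $\overline{\mathbf{R}}(k-d)$, i.e.\ $\mathbf{I}\pm i\alpha\mathbf{C}(k-d)$ with $\mathbf{C}(k-d)$ of order $2^{k-d}$; by Proposition \ref{RSPEC} its eigenvalues are $1\pm\alpha\sqrt{2^{k-d}-1}$. Since $\alpha=1/\sqrt{2^k-1}$, for every $d\geq 1$ we have $\alpha\sqrt{2^{k-d}-1}=\sqrt{(2^{k-d}-1)/(2^k-1)}<1$, so both eigenvalues are strictly positive and $\mathbf{R}_{d,q}(k)$ is nonsingular. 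Because the Gram matrix of $\mathcal{F}_{\mathcal{S}_{d,q}}$ is $\frac{m}{n}\mathbf{R}_{d,q}(k)$, its nonsingularity is equivalent to linear independence of the frame vectors, so Lemma \ref{le:norm-subset} applies.

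The main obstacle, and really the only subtle point, is the boundary case $d=0$, where $\mathcal{S}_{0,1}=\mathcal{I}$ contains all $2^k$ vectors living in $\mathbb{C}^{2^{k-1}}$; these cannot be linearly independent, so the cited form of Lemma \ref{le:norm-subset} does not apply verbatim. I would dispatch this case directly: $\mathbf{F}_{\mathcal{S}_{0,1}}=\sum_{j=1}^{2^k} f_j\otimes f_j=\mathbf{I}_m$ by Corollary \ref{NEWGRASS}, so $\lVert\mathbf{F}_{\mathcal{S}_{0,1}}\rVert=1$, while $\frac{1}{2}\lVert\mathbf{R}(k)\rVert=\frac{1}{2}(1+\alpha\sqrt{2^k-1})=1$, and the identity still holds. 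Alternatively, one can sidestep the linear-independence hypothesis entirely by noting that $\mathbf{F}_{\mathcal{S}}=\mathcal{F}_{\mathcal{S}}\mathcal{F}_{\mathcal{S}}^*$ and $\frac{m}{n}\mathbf{R}_{\mathcal{S}}=\mathcal{F}_{\mathcal{S}}^*\mathcal{F}_{\mathcal{S}}$ always share the same nonzero spectrum, so $\lVert\mathbf{F}_{\mathcal{S}}\rVert=\frac{m}{n}\lVert\mathbf{R}_{\mathcal{S}}\rVert$ irrespective of rank; this gives a single uniform argument covering all $d\in\{0,\ldots,k-1\}$ at once.
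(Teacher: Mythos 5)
Your proof is correct and its core is the same as the paper's: compute the spectrum of $\mathbf{R}_{d,q}(k)$ via Propositions \ref{RSUBS} and \ref{RSPEC}, conclude that it is nonsingular for $d\geq 1$, deduce linear independence of $\mathcal{F}_{\mathcal{S}_{d,q}}$ from nonsingularity of the Gram matrix $\frac{1}{2}\mathbf{R}_{d,q}(k)=\mathcal{F}_{\mathcal{S}_{d,q}}^*\mathcal{F}_{\mathcal{S}_{d,q}}$, and then invoke Lemma \ref{le:norm-subset}. You go beyond the paper in two worthwhile respects. First, you notice that the statement (via (\ref{eqn25})) permits $d=0$, where $\mathcal{S}_{0,1}=\mathcal{I}$ consists of $2^k$ vectors in $\mathbb{C}^{2^{k-1}}$, so linear independence fails and the paper's argument, which only establishes nonzero eigenvalues for $d\geq 1$, does not literally cover this case; your direct verification ($\mathbf{F}_{\mathcal{I}}=\mathbf{I}_m$ has norm $1$, matching $\frac{1}{2}\lVert\mathbf{R}(k)\rVert=\frac{1}{2}(1+\alpha\sqrt{2^k-1})=1$) closes that gap. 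Second, your alternative argument — that $\mathcal{F}_{\mathcal{S}}\mathcal{F}_{\mathcal{S}}^*$ and $\mathcal{F}_{\mathcal{S}}^*\mathcal{F}_{\mathcal{S}}=\frac{m}{n}\mathbf{R}_{\mathcal{S}}$ always share the same nonzero spectrum — removes the linear-independence hypothesis altogether and handles all $d\in\{0,\ldots,k-1\}$ uniformly; it is cleaner than the route taken in the paper and would also simplify Lemma \ref{le:norm-subset} itself.
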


\begin{proof}
From Proposition \ref{RSPEC}, we have that 
\[
\sigma(\mathbf{R}_{d,q}(k))=\left\{1\pm \frac{\sqrt{2^{k-d}-1}}{\sqrt{2^k-1}}\right\}.
\]
In particular, all eigenvalues of $\mathbf{R}_{d,q}(k)$ are nonzero for $d\geq 1$, which implies that $\mathbf{R}_{d,q}(k)$ has full rank.  It follows that $\mathcal{F}_{\mathcal{S}}$ (viewed as matrix of column vectors) also has full rank since $\frac{1}{2}\mathbf{R}_{d,q}(k)=\mathcal{F}_{{\mathcal{S}}_{d,q}}^*\mathcal{F}_{{\mathcal{S}}_{d,q}}$.  Thus, $\mathcal{F}_{\mathcal{S}}$ is a linearly independent set and (\ref{eq:norm-sub-block}) now follows from Lemma \ref{le:norm-subset}.
\end{proof}

The next theorem follows immediately from Lemmas \ref{THM42} and \ref{le:norm-sub-block}.
\begin{theorem} \label{th:norm-sub-block}
Let $\mathcal{F}=\{f_j\}_{j=1}^{2^k}$ be an ETF defined by Corollary \ref{NEWGRASS}, where $\mathbf{R}(k)=\mathbf{I}+i\alpha\mathbf{C}(k)$ with $\mathbf{C}(k)$ is a skew-symmetric conference matrix of order $n=2m=2^k$ and $\alpha=1/\sqrt{2^k-1}$. Then the norm of $\mathbf{F}_{\mathcal{S}_{d,q}}=\sum_{j\in\mathcal{S}_{d,q}}f_j\otimes f_j$, where $\mathcal{S}_{d,q}\subset\mathcal{I}=[2^k]$ is a subset corresponding to $q$-th recursive diagonal sub-block $\mathbf{R}_{d,q}(k)$ at depth $d$ of $\mathbf{R}(k)$, is given by
\begin{equation}\label{eqn27}
    \lVert\mathbf{F}_{\mathcal{S}_{d,q}}\rVert=\frac{1}{2}+\frac{1}{2}\frac{\sqrt{2^{k-d}-1}}{\sqrt{2^k-1}},
\end{equation}
where $d\in\{0,\ldots,k-1\}$.   
\end{theorem}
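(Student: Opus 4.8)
The plan is to read off the result as the composition of the two preceding lemmas, followed by a substitution of the specific value $\alpha = 1/\sqrt{2^k-1}$. The quantity of interest is $\lVert\mathbf{F}_{\mathcal{S}_{d,q}}\rVert$, the subset norm of the diagonal block indexed by $\mathcal{S}_{d,q}$ from (\ref{eqn25}). First I would invoke Lemma \ref{le:norm-sub-block}, which identifies this subset norm with exactly half the operator norm of the corresponding recursive diagonal sub-block, namely $\lVert\mathbf{F}_{\mathcal{S}_{d,q}}\rVert = \tfrac{1}{2}\lVert\mathbf{R}_{d,q}(k)\rVert$. This is the step carrying the real content, since it rests on the fact (established in the proof of that lemma via Proposition \ref{RSPEC}) that for $d\geq 1$ the sub-block $\mathbf{R}_{d,q}(k)$ has only nonzero eigenvalues, hence full rank, so that $\mathcal{F}_{\mathcal{S}_{d,q}}$ is linearly independent and Lemma \ref{le:norm-subset} applies.

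Next I would apply Lemma \ref{THM42}, which gives the sub-block norm in closed form as $\lVert\mathbf{R}_{d,q}(k)\rVert = 1 + \alpha\sqrt{2^{k-d}-1}$. Substituting this into the identity from the previous step yields $\lVert\mathbf{F}_{\mathcal{S}_{d,q}}\rVert = \tfrac{1}{2}\bigl(1 + \alpha\sqrt{2^{k-d}-1}\bigr)$, and then inserting $\alpha = 1/\sqrt{2^k-1}$ and distributing the factor of one half produces exactly the claimed expression (\ref{eqn27}).

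Because both lemmas have already done the work, there is no genuine obstacle here; the only care needed is bookkeeping. I would verify that the hypotheses of both lemmas hold simultaneously for the frame under consideration, namely that $\mathbf{R}(k)=\mathbf{I}+i\alpha\mathbf{C}(k)$ with $n = 2m = 2^k$ and $\alpha = 1/\sqrt{2^k-1}$, so that Lemma \ref{THM42} is in force and the eigenvalue computation underlying Lemma \ref{le:norm-sub-block} is valid. The one point meriting separate attention is the boundary index $d=0$ admitted by the statement: here $\mathcal{S}_{0,1}$ is the full index set, the $2^k$ frame vectors in $\mathbb{C}^m$ cannot be linearly independent, and the linear-independence argument of Lemma \ref{le:norm-sub-block} does not directly apply. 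I would dispatch this case by hand, noting that $\mathbf{F}_{\mathcal{S}_{0,1}} = \mathbf{F} = \mathbf{I}_m$ by Corollary \ref{NEWGRASS}, so $\lVert\mathbf{F}_{\mathcal{S}_{0,1}}\rVert = 1$, which agrees with (\ref{eqn27}) at $d=0$ since the fraction there collapses to $1$.
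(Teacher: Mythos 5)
Your proposal is correct and takes essentially the same route as the paper, which obtains the theorem immediately by combining Lemma \ref{le:norm-sub-block} (giving $\lVert\mathbf{F}_{\mathcal{S}_{d,q}}\rVert=\tfrac{1}{2}\lVert\mathbf{R}_{d,q}(k)\rVert$) with Lemma \ref{THM42} and substituting $\alpha=1/\sqrt{2^k-1}$. Your explicit handling of the boundary case $d=0$, where linear independence fails and one instead uses $\mathbf{F}_{\mathcal{S}_{0,1}}=\mathbf{I}_m$, is a point the paper only addresses informally in the remark following the theorem, so it is a welcome (but not route-changing) addition.
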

Notice that if $d=0$ in Theorem \ref{th:norm-sub-block}, then this corresponds to a subset of size one, and the norm reduces to exactly 1; this makes sense because $\mathcal{S}_{0,q}=\mathcal{I}$ in which case $\mathbf{F}_\mathcal{S}=\mathbf{I}$. On the other hand, if $d=k$ then $\mathcal{S}_{k,q}$ consists of a single element in which case it is easy to show that $\mathbf{F}_\mathcal{S}=\frac{1}{2}$ for a single frame vector.

%%%%%%%%%%%%
\section{Diagonal Partitions and the MSS Theorem}
The MSS theorem guarantees that any ETF  which meets the hypothesis (\ref{MSSHYP}) can be partitioned to meet the norm bound  (\ref{eqn18}). In this section, we describe partitions of ETF described by (\ref{NEWGRASS}) that satisfy the MSS theorem. Towards this end, let
\begin{equation*}
    \mathcal{P}=\{\mathcal{S}_1,\ldots,\mathcal{S}_r\}
\end{equation*}
denote a partition of $\mathcal{I}=[2^k]$ into disjoint subsets. Thus, $\mathcal{I}=\bigcup_{h=1}^r\mathcal{S}_h$. If we assume each subset $\mathcal{S}_h$ to be a diagonal sub-block defined by (\ref{eqn25}), i.e., 
\begin{equation*}
    \mathcal{P}=\{\mathcal{S}_{d_1,q_1},\ldots,\mathcal{S}_{d_r,q_r}\},
\end{equation*}
then we shall call $\mathcal{P}$ a \textit{diagonal partition}. As an example suppose we form a partition $\mathcal{P}$ of a frame $\mathcal{F}=\{f_j\}_{j=1}^{16}$ consisting of 16 vectors into $r=8$ subsets as follows: 
\begin{align*}
\mathcal{P} &= \{\mathcal{S}_{3,1},\mathcal{S}_{3,2},\mathcal{S}_{2,2},\mathcal{S}_{3,5},\mathcal{S}_{3,6},\mathcal{S}_{3,7},\mathcal{S}_{4,15},\mathcal{S}_{4,16}\} \\
&=\{\{1,2\},\{3,4\},\{5,6,7,8\},\{9,10\},\{11,12\},\{13,14\},\{15\},\{16\}\}.   
\end{align*}
Then $\mathcal{P}$ is a diagonal partition because the indices in each subset of $\mathcal{P}$ correspond to entries at the row/column locations of the diagonal sub-blocks of $\mathbf{R}$. Figure \ref{fig1} shows visually these diagonal sub-blocks.
\begin{figure}[ht]
\centering
\includegraphics[scale=0.5]{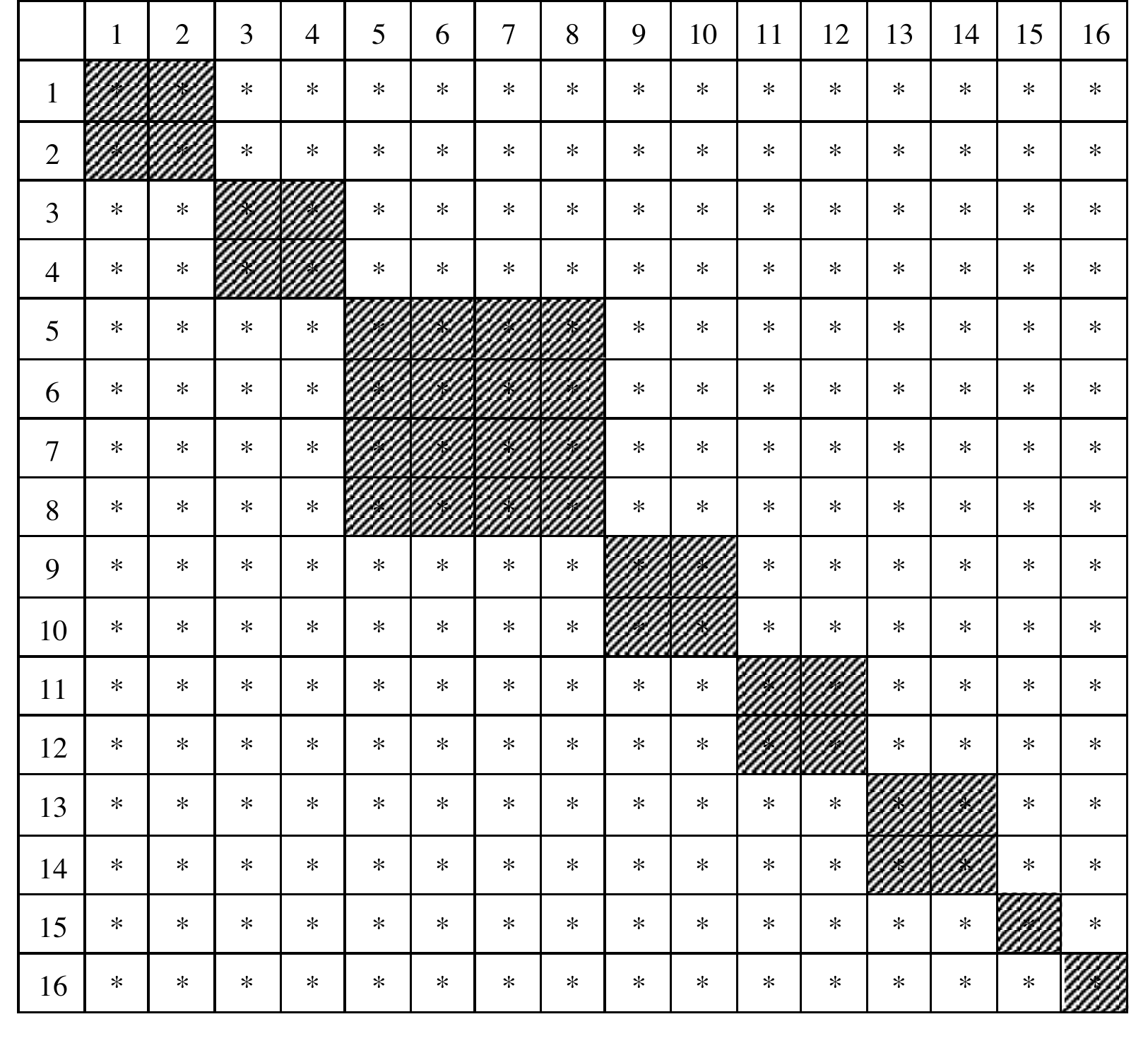}
\caption{Diagonal Partition $\mathcal{P}$ of $\mathcal{F}=\{f_j\}_{j=1}^{16}$}
\label{fig1}
\end{figure}
Notice that the cardinality of each subset is given by $\lvert \mathcal{S}_{d_h,q_h} \rvert=2^{k-d_h}$ and that each subset consists of consecutive elements. 

We are finally ready to prove Theorem \ref{JHL} by showing that certain diagonal partitions are MSS partitions and that the bound on their subset norms derived from our proof is sharper than the MSS bound.
\begin{proof}[Proof of Theorem \ref{JHL}]
Given a diagonal partition $\mathcal{P}$ of $\mathcal{I}$ consisting of $r$ subsets, there must exists at least one subset $\mathcal{S}_{max}\in\mathcal{P}$ such that
\begin{equation*}
    \lvert\mathcal{S}_{d_h,q_h}\rvert \leq \lvert\mathcal{S}_{max}\rvert
\end{equation*}
for all $h\in\{1,\ldots,r\}$. In other words, in $\mathcal{P}$ there is one subset that has largest (or equal) cardinality compared to all other subsets. We focus on this largest subset because the norm described by (\ref{eqn27}) increases with the cardinality of the subset $\mathcal{S}_{d,q}$. It follows that
\[
\lVert\mathbf{F}_{\mathcal{S}_{d_h,q_h}}\rVert \leq \lVert\mathbf{F}_{\mathcal{S}_{max}}\rVert.
\]
Hence, we need only be concerned about bounding the subset norm of $\lvert\mathcal{S}_{max}\rvert$. 

Next, observe that $\lvert \mathcal{S}_{max}\rvert=2^{k-d_{min}}$, where $d_{min}=\min\{d_1,\ldots,d_r\}$. It follows that
\begin{equation}\label{eqn40}
    \lVert \mathbf{F}_{\mathcal{S}_{max}} \rVert=\frac{1}{2}+\frac{1}{2}\frac{\sqrt{2^{k-d_{min}}-1}}{\sqrt{2^k-1}}.
\end{equation}
Moreover, we have
\begin{equation}\label{eqn41}
    2^{d_{min}}\leq r \leq 2^{d_{min}+1}.
\end{equation}
We now manipulate the right hand side of (\ref{eqn40}) to obtain
\begin{align}\label{eqn42}
    \frac{1}{2}+\frac{1}{2}\frac{\sqrt{2^{k-d_{min}}-1}}{\sqrt{2^k-1}}&=\frac{1}{2}+\frac{1}{2}\frac{\sqrt{2^{k-d_{min}}-1}}{\sqrt{2^{d_{min}}}\sqrt{2^{k-d_{min}}-2^{-d_{min}}}}\notag \\[4pt]
    &\leq \frac{1}{2}+\frac{1}{\sqrt{2^{d_{min}+2}}} 
\end{align}
It follows from (\ref{eqn41}) that
\begin{equation}\label{eqn43}
    \frac{1}{\sqrt{2^{d_{min}+2}}}\leq\frac{1}{\sqrt{2r}}.
\end{equation}
Then combining (\ref{eqn40}), (\ref{eqn42}), and (\ref{eqn43})  yields
\begin{equation*}
     \lVert\mathbf{F}_{\mathcal{S}_{d_h,q_h}}\rVert \leq\frac{1}{2}+\frac{1}{\sqrt{2r}}
\end{equation*}
as desired.
\end{proof}

Since the inequalities
\begin{equation*}
    \lVert\mathbf{F}_{\mathcal{S}_{max}}\rVert\leq\frac{1}{2}+\frac{1}{\sqrt{2r}}\leq\left(\frac{1}{\sqrt{r}}+\frac{1}{\sqrt{2}}\right)^2
\end{equation*}
hold, it follows that any diagonal partition described in Theorem \ref{JHL} is an MSS partition with $\delta=1/2$, but with a sharper bound. Table \ref{tab:my_label} compares the two bounds for various values of $r$.
\begin{table}[h]
    \centering
    \begin{tabular}{|c|l|c|}
     \hline\hline
         $r$ & \text{Thm. \ref{JHL}} & \text{MSS Thm.} \\
         \hline
         & {\small $\frac{1}{2}+\frac{1}{\sqrt{2}\sqrt{r}}$}&{\small $\left(\frac{1}{\sqrt{r}}+\frac{1}{\sqrt{2}} \right)^2$} \\
         \hline\hline
 1 & 1.20711 & 2.91421 \\
 2 & 1. & 2. \\
 3 & 0.908248 & 1.64983 \\
 4 & 0.853553 & 1.45711 \\
 5 & 0.816228 & 1.33246 \\
 6 & 0.788675 & 1.24402 \\
 7 & 0.767261 & 1.17738 \\
 8 & 0.75 & 1.125 \\
 \hline\hline
    \end{tabular}
    \caption{Comparison of bounds: Theorem \ref{JHL} vs. MSS Theorem}
    \label{tab:my_label}
\end{table}\\
Next, we shall describe an efficient algorithm to construct a diagonal partition $\mathcal{P}$ as described in Theorem \ref{JHL}.\\ \\
\begin{algorithm}[H]\label{ALG}
\SetAlgoLined
\KwResult{$\mathcal{I}=\{1,\ldots,2^k\}$  is partitioned into $\mathcal{P}=\{\mathcal{S}_{d_i,q_i}\}_{i=1}^r$.}
 Given $r>1$\;
 Initialize $d=\floor{\log_2 r}$, $r'=2^{d}$\;
 Initialize $\mathcal{P}=\emptyset$\;
 \For{$i=1$ \emph{\KwTo} $2(r-r')$}{
 $\mathcal{S}_{d+1,i}=\{(i-1)\cdot2^{k-(d+1)}+1,\ldots, i\cdot2^{k-(d+1)}\}$\;
 $\mathcal{P}=\mathcal{P}\cup\{\mathcal{S}_{d+1,i}\}$\;
 }
 \For{$i=r-r'+1$ \emph{\KwTo} $r'$}{
 $\mathcal{S}_{d,i}=\{(i-1)\cdot2^{k-d}+1,\ldots, i\cdot2^{k-d}\}$\;
 $\mathcal{P}=\mathcal{P}\cup\{\mathcal{S}_{d,i}\}$\;
 }
 \caption{Diagonal Partition Algorithm}
\end{algorithm}
\vspace{10pt}
 Algorithm \ref{ALG} begins by initializing $r'$ to be the closest power of $2$ less than $r$ and $\mathcal{P}$ is an empty collection of sets. The first loop will then form $2(r-r')$ subsets, with cardinality $2^{k-(d+1)}$, of consecutive elements from $\mathcal{I}$ and the first subset will begin with $1$. Notice that this first loop will form subsets from the first
\begin{align}\label{eqnAA}
    2(r-r')\cdot 2^{k-(d+1)}&=(r-2^{d})\cdot 2^{k-d}=r\cdot 2^{k-d}-2^k
\end{align}
elements of $\mathcal{I}$. Then the subsets are placed in the partition $\mathcal{P}$. 

The second loop will form $2r'-r$ subsets of cardinality $2^{k-d}$ from the remaining
\begin{align}\label{eqnBB}
    (2r'-r)\cdot 2^{k-d}=(2^{d+1}-r)\cdot 2^{k-d}=2^{k+1}-r\cdot 2^{k-d}
\end{align}
elements of $\mathcal{I}$. Notice that if we sum (\ref{eqnAA}) and (\ref{eqnBB}) we obtain $2^k$, which equals the total number of elements of $\mathcal{I}$.

%%%%%%%%%%%%%
\section{Partitions with Small Subsets}
In this section we prove more general results regarding partitions of arbitrary ETFs but whose subsets are restricted to cardinalities of three or less. First, we consider the case where our partition $\mathcal{P}$ is such that its largest subset has cardinality two. We prove that all such partitions are MSS partitions by exactly computing their corresponding subset norms.
\begin{lemma} \label{le:two-subset}
Let $\mathcal{F}=\{f_j\}_{j=1}^{n}$ be an ETF  constructed from the $\mathbf{R}$-matrix in Corollary \ref{NEWGRASS}. Then the norm $\lVert\mathbf{F}_{\mathcal{S}}\rVert$, where $\mathcal{S}$ is a two-element subset, is given by
\begin{equation}\label{eqn60}
\lVert \mathbf{F}_{\mathcal{S}}\rVert=\frac{m}{n}\left(1+ \sqrt{\frac{n-m}{m(n-1)}}\right).
\end{equation}
\end{lemma}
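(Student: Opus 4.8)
The plan is to reduce the computation of $\lVert\mathbf{F}_{\mathcal{S}}\rVert$ to the spectrum of a $2\times 2$ principal submatrix of $\mathbf{R}$, exactly as Lemma \ref{le:norm-subset} permits. Writing $\mathcal{S}=\{j,l\}$ with $j\neq l$, I would first invoke Lemma \ref{le:norm-subset} to obtain
\begin{equation*}
    \lVert\mathbf{F}_{\mathcal{S}}\rVert=\frac{m}{n}\lVert\mathbf{R}_{\mathcal{S}}\rVert,
\end{equation*}
where $\mathbf{R}_{\mathcal{S}}$ is the $2\times 2$ Hermitian submatrix whose row and column indices are $j$ and $l$. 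By (\ref{eq:R-matrix}) this submatrix has unit diagonal and off-diagonal entries $\mathbf{R}_{j,l}$ and $\mathbf{R}_{l,j}=\overline{\mathbf{R}_{j,l}}$, so that
\begin{equation*}
    \mathbf{R}_{\mathcal{S}}=\begin{bmatrix} 1 & \mathbf{R}_{j,l} \\ \overline{\mathbf{R}_{j,l}} & 1 \end{bmatrix}.
\end{equation*}

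Next I would compute the spectrum of this matrix directly. For a Hermitian matrix of this form the eigenvalues are $1\pm\lvert\mathbf{R}_{j,l}\rvert$, and by (\ref{eq:R-matrix}) the off-diagonal magnitude is $\lvert\mathbf{R}_{j,l}\rvert=\sqrt{\tfrac{n-m}{m(n-1)}}$ in both the real and complex cases. This is the key simplification: although the off-diagonal entries themselves differ between $\mathbb{H}=\mathbb{R}$ (real, symmetric) and $\mathbb{H}=\mathbb{C}$ (purely imaginary, with $\mathbf{R}_{l,j}=-\mathbf{R}_{j,l}$), only their common modulus enters the eigenvalue formula, so a single calculation covers both settings. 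Since $\mathbf{R}_{\mathcal{S}}$ is Hermitian positive semidefinite, its norm is its largest eigenvalue, giving $\lVert\mathbf{R}_{\mathcal{S}}\rVert=1+\sqrt{\tfrac{n-m}{m(n-1)}}$, and substituting into the displayed reduction yields (\ref{eqn60}) at once.

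The one genuine hypothesis to discharge — and the step I expect to require the most care — is the linear independence of $\mathcal{F}_{\mathcal{S}}=\{f_j,f_l\}$ demanded by Lemma \ref{le:norm-subset}. I would verify this by noting that, from (\ref{INNERPROD}), $\lvert\langle f_j,f_l\rangle\rvert=\tfrac{m}{n}\sqrt{\tfrac{n-m}{m(n-1)}}$ while $\lVert f_j\rVert\,\lVert f_l\rVert=\tfrac{m}{n}$, so the Cauchy--Schwarz inequality is strict precisely when $\sqrt{\tfrac{n-m}{m(n-1)}}<1$, which holds for every nondegenerate ETF (equality forcing $m=1$). Equivalently, one can observe that both eigenvalues $1\pm\sqrt{\tfrac{n-m}{m(n-1)}}$ of $\mathbf{R}_{\mathcal{S}}$ are strictly positive, so $\tfrac{m}{n}\mathbf{R}_{\mathcal{S}}=\mathcal{F}_{\mathcal{S}}^*\mathcal{F}_{\mathcal{S}}$ has full rank and hence $\mathcal{F}_{\mathcal{S}}$ is linearly independent. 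With this in hand the remaining algebra is routine, and the result follows directly.
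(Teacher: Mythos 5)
Your proof is correct, and it takes a genuinely cleaner route than the paper's. The paper does not invoke Lemma \ref{le:norm-subset} in its proof of Lemma \ref{le:two-subset}; instead it re-derives that reduction from scratch in the two-element case, writing $v=c_1f_1+c_2f_2$, extracting the two coefficient equations, eliminating $\lambda$ by dividing and cross-multiplying to find $c_2=\pm i c_1$, and back-substituting to get the eigenvalues --- with $\mathbb{H}=\mathbb{R}$ then dispatched as ``a similar derivation.'' By citing Lemma \ref{le:norm-subset} and reading off the spectrum $1\pm\lvert\mathbf{R}_{j,l}\rvert$ of a $2\times 2$ Hermitian matrix, you collapse both fields into a single computation, since only the modulus $\alpha=\sqrt{(n-m)/(m(n-1))}$ of the off-diagonal entry enters. (One pedantic caveat: Lemma \ref{le:norm-subset} is stated for $n=2^k$, but the Section 5 derivation behind it is valid for arbitrary $n$, which is what the present lemma requires.) Your linear-independence argument also differs from the paper's, and is in fact the more careful one: the paper computes $\det\mathbf{R}_{\mathcal{S}}$ with the $(2,1)$ entry written as $\mp\bar{c}\alpha$, i.e.\ as $-\overline{\mathbf{R}_{1,2}}$, which yields $1+\alpha^2$; Hermitian symmetry actually forces $\mathbf{R}_{2,1}=+\overline{\mathbf{R}_{1,2}}$, so the determinant is $1-\alpha^2$, positive precisely when $m\geq 2$ --- exactly the nondegeneracy threshold your Cauchy--Schwarz (equivalently, eigenvalue-positivity) argument flags, with $m=1$ genuinely producing dependent vectors. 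What the paper's longer route buys is reusability: its coefficient-elimination scheme is the template reused for three-element subsets in Lemma \ref{le:subset-size-three}, where the Gram spectrum can no longer be read off by inspection, whereas your shortcut is specific to the $2\times 2$ case.
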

\begin{proof}
Without loss of generality, let $\mathcal{S}=\{1,2\}$. We claim that $\mathcal{F}_{\mathcal{S}}=\{f_1,f_2\}$ is a linearly independent set.  To prove this, we use (\ref{eq:R-matrix}) to calculate
\begin{align*}
    \det\mathbf{R}_{\mathcal{S}} & =
    \det 
    \left(
    \begin{array}{cc}
    1 & \pm c\sqrt{\frac{n-m}{m(n-1)}}\\
    \mp \bar{c}\sqrt{\frac{n-m}{m(n-1)}} & 1
    \end{array}
    \right) \\
    & = 1+\frac{n-m}{m(n-1)}
\end{align*}
where $c=1$ if $\mathbb{H}=\mathbb{R}$ and $c=i$ if $\mathbb{H}=\mathbb{C}$.  This shows $\det\mathbf{R}_{\mathcal{S}}>0$, which implies $\mathbf{R}_{\mathcal{S}}$ has full rank.  Thus, $\mathcal{F}_{\mathcal{S}}=\{f_1,f_2\}$ is a linearly independent set.

We now compute the norm of
\begin{equation*}
    \mathbf{F}_{\mathcal{S}}= f_1\otimes f_1+f_2\otimes f_2.
\end{equation*}
Towards this end, we shall find the spectrum of $\mathbf{F}_{\mathcal{S}}$ by solving the eigenvalue problem $\mathbf{F}_{\mathcal{S}}v=\lambda v$, where we assume $\lambda\neq 0$ and $v$ is a linear combination of $f_1$ and $f_2$ since $\mathbf{F}_{\mathcal{S}}v \in \mathrm{span}(\mathcal{F}_{\mathcal{S}})$, namely
\begin{equation*}
    v=c_1 f_1 + c_2 f_2. 
\end{equation*}
Then
\begin{align*}
    \mathbf{F}_{\mathcal{S}}v=\lambda v&\Rightarrow (f_1\otimes f_1+f_2\otimes f_2)(c_1 f_1 + c_2 f_2)=\lambda (c_1 f_1 + c_2 f_2)\\[4pt]
    &\Rightarrow (f_1 f_1^*+f_2 f_2^*)(c_1 f_1 + c_2 f_2)=\lambda (c_1 f_1 + c_2 f_2)\\[4pt]
    &\Rightarrow c_1 f_1 f_1^*f_1+c_1 f_2 f_2^*f_1
    +c_2 f_1 f_1^*f_2+c_2 f_2 f_2^*f_2=\lambda c_1 f_1 +\lambda c_2 f_2 \\[4pt]
    &\Rightarrow
    f_1(\beta_{1,1} c_1+\beta_{1,2}c_2) + f_2(\beta_{2,1}c_1+\beta_{2,2}c_2)=\lambda c_1 f_1 +\lambda c_2 f_2
\end{align*}
where $\beta_{j,j}=f_j^*f_j$ and $\beta_{j,k}=f_j^*f_k$.
We can equate the parts of the last equation above to get
\begin{align}\label{LEQN2S}
    \beta_{1,1} c_1+\beta_{1,2} c_2&=\lambda c_1\\
    \beta_{2,1}c_1+\beta_{2,2}c_2&= \lambda c_2.\notag
\end{align}
Dividing the two above equations and cross-multiplying yields
\begin{equation}\label{eq:c1c2}
    \beta_{1,1}c_1 c_2 +\beta_{1,2}c_2^2=\beta_{2,1}c_1^2+\beta_{2,2}c_1 c_2.
\end{equation}

In the case $\mathbb{H}=\mathbb{C}$, we have $\beta_{1,1}=\beta_{2,2}$ and $\beta_{1,2}=-\beta_{2,1}$ since $\mathbf{R}$ is Hermitian.  This simplifies (\ref{eq:c1c2}) to
\begin{equation*}
    c_2^2=-c_1^2.
\end{equation*}
Thus,
\begin{equation*}
    c_2=\pm i c_1.
\end{equation*}
If we substitute $c_2$ into (\ref{LEQN2S}), solve for $\lambda$, and substitute the exact values for $\beta_{1,1}$ and $\beta_{1,2}$ from (\ref{INNERPROD}), we obtain two non-zero eigenvalues:
\begin{equation*}
    \lambda=\beta_{1,1}\pm i\beta_{1,2}=\frac{m}{n}\pm \frac{m}{n}\sqrt{\frac{n-m}{m(n-1)}}.
\end{equation*}
Thus, 
\[
\lVert \mathbf{F}_{\mathcal{S}}\rVert=\lambda_{max}=\frac{m}{n}\left(1+ \sqrt{\frac{n-m}{m(n-1)}}\right).
\]

In the case $\mathbb{H}=\mathbb{R}$, we have instead $\beta_{1,2}=\beta_{2,1}$, in which case a similar derivation shows that the same formula holds for $\lVert \mathbf{F}_{\mathcal{S}}\rVert$.
\end{proof}
As a corollary we have the following result.

\begin{theorem}\label{subset2}
Let $\mathcal{F}$ be an ETF of size $n$ constructed from the matrix $\mathbf{R}$ defined in Corollary \ref{COR24}. Suppose $\mathcal{P}=\{\mathcal{S}_1,\ldots,\mathcal{S}_r\}$ is a partition $[n]$, where $\lvert \mathcal{S}_h\rvert\leq 2$ for every $h\in[r]$. Then the norm $\lVert \mathbf{F}_{\mathcal{S}_h}\rVert$ satisfies the inequality
\begin{equation}\label{BND2}
    \lVert \mathbf{F}_{\mathcal{S}_h}\rVert\leq \delta +\sqrt{\frac{\delta}{r}}
\end{equation}
for every $h\in[r]$ where $\delta=m/n$.
\end{theorem}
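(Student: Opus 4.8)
The plan is to split into two cases according to $\lvert\mathcal{S}_h\rvert\in\{1,2\}$, dispatch the singleton case by a one-line eigenvalue computation, and handle the pair case with the exact norm formula of Lemma~\ref{le:two-subset}, after which the inequality reduces to an arithmetic statement controlled by a counting bound on $r$.

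First I would dispose of the case $\lvert\mathcal{S}_h\rvert=1$. If $\mathcal{S}_h=\{j\}$ then $\mathbf{F}_{\mathcal{S}_h}=f_j\otimes f_j=f_jf_j^*$ is rank-one Hermitian with unique nonzero eigenvalue $f_j^*f_j=m/n=\delta$ by (\ref{INNERPROD}); hence $\lVert\mathbf{F}_{\mathcal{S}_h}\rVert=\delta\leq\delta+\sqrt{\delta/r}$ and the bound holds automatically.

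The substantive case is $\lvert\mathcal{S}_h\rvert=2$. Here Lemma~\ref{le:two-subset} supplies the exact value $\lVert\mathbf{F}_{\mathcal{S}_h}\rVert=\delta+\delta\sqrt{\tfrac{n-m}{m(n-1)}}$, so (\ref{BND2}) is equivalent to $\delta\sqrt{\tfrac{n-m}{m(n-1)}}\leq\sqrt{\delta/r}$. Since both sides are nonnegative, I would square, cancel a factor of $\delta=m/n$, and reduce the claim to the purely arithmetic inequality $r(n-m)\leq n(n-1)$.

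The crux — and the step I expect to be the main obstacle to state cleanly — is the counting constraint on $r$. Because $\mathcal{P}$ partitions $[n]$ into subsets of size at most $2$ and already contains the pair $\mathcal{S}_h$, writing $a$ for the number of singletons and $b$ for the number of pairs gives $a+b=r$ and $a+2b=n$, whence $b=n-r\geq 1$ and therefore $r\leq n-1$. Combining $r\leq n-1$ with $n-m\leq n$ yields $r(n-m)\leq(n-1)n=n(n-1)$, which is exactly the reduced inequality. This closes the pair case and, together with the singleton case, establishes (\ref{BND2}) for every $h\in[r]$.
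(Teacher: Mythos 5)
Your proof is correct and takes essentially the same route as the paper's: both dispatch the singleton case by noting the norm is exactly $\delta$, invoke Lemma~\ref{le:two-subset} for the pair case, and close with elementary arithmetic resting on the fact that a partition of $[n]$ into nonempty parts of size at most two bounds $r$ by (roughly) $n$. The only difference is cosmetic --- the paper bounds $\sqrt{\tfrac{n-m}{m(n-1)}}\le \tfrac{1}{\sqrt{m}}$ and then uses $r\le n$, whereas you square and reduce to $r(n-m)\le n(n-1)$ via your counting bound $r\le n-1$ --- so the two arguments are the same computation in slightly different order.
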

\begin{proof}
It is easy to prove that if $|\mathcal{S}_h|=1$, then $\lVert \mathbf{F}_{\mathcal{S}_h}\rVert = m/n=\delta$ and thus (\ref{BND2}) clearly holds.   Therefore, we assume $|\mathcal{S}_h|=2$.  Then since $r \leq n$, it follows from Lemma \ref{le:two-subset} that
\begin{align*}
\lVert \mathbf{F}_{\mathcal{S}}\rVert & =  \frac{m}{n}\left(1+ \sqrt{\frac{n-m}{m(n-1)}}\right) \\
& \leq \frac{m}{n}\left(1+ \sqrt{\frac{1}{m}}\right)
= \frac{m}{n}+\frac{1}{\sqrt{n}}\cdot \sqrt{\frac{m}{n}} \\
& \leq \delta +\sqrt{\frac{\delta}{r}}.
\end{align*}
\end{proof}
Observe that the norm bound (\ref{BND2}) is  sharper than both the MSS bound and the bound in Theorem \ref{JHL} where $\delta=1/2$. 

\par
Next we consider subsets of size three and obtain an analogous formula for their corresponding subset norms.
\begin{lemma}\label{le:subset-size-three}
Let $\mathcal{F}=\{f_j\}_{j=1}^{n}$ be an ETF  constructed from the $\mathbf{R}$-matrix in Corollary \ref{COR24}.  Then the norm $\lVert\mathbf{F}_{\mathcal{S}}\rVert$, where $\mathcal{S}$ is a three-element subset, is given by
\begin{equation}\label{eqn62}
    \lVert \mathbf{F}_{\mathcal{S}}\rVert=
    \left\lbrace \begin{array}{cc}
    \frac{m}{n}\left(1+ \epsilon \sqrt{\frac{(n-m)}{m(n-1)}}\right) 
    & \textit{ if } \mathbb{H}=\mathbb{R};  \\[4pt]
    \frac{m}{n}\left(1+ \sqrt{\frac{3(n-m)}{m(n-1)}}\right) 
    & \textit{ if } \mathbb{H}=\mathbb{C}, 
    \end{array}
    \right.
\end{equation}
where $\epsilon=1$ or 2 .
\end{lemma}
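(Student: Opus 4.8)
The plan is to follow the template of the proof of Lemma~\ref{le:two-subset}: reduce the computation of $\lVert\mathbf{F}_{\mathcal{S}}\rVert$ to finding the largest eigenvalue of the $3\times 3$ principal submatrix $\mathbf{R}_{\mathcal{S}}$ of $\mathbf{R}$, and then compute that eigenvalue explicitly. Write $\beta=\sqrt{\frac{n-m}{m(n-1)}}$ for the common magnitude of the off-diagonal entries of $\mathbf{R}$. Since $\mathbf{F}_{\mathcal{S}}=\mathcal{F}_{\mathcal{S}}\mathcal{F}_{\mathcal{S}}^*$ and $\frac{m}{n}\mathbf{R}_{\mathcal{S}}=\mathcal{F}_{\mathcal{S}}^*\mathcal{F}_{\mathcal{S}}$ share the same nonzero eigenvalues, we have $\lVert\mathbf{F}_{\mathcal{S}}\rVert=\frac{m}{n}\,\lambda_{\max}(\mathbf{R}_{\mathcal{S}})$; invoking Lemma~\ref{le:norm-subset} makes this precise, and as in Lemma~\ref{le:two-subset} I would first record that $\mathcal{F}_{\mathcal{S}}$ is linearly independent by checking $\det\mathbf{R}_{\mathcal{S}}\neq 0$. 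The whole problem thus becomes computing the top eigenvalue of a $3\times 3$ Hermitian matrix with unit diagonal and off-diagonal entries of magnitude $\beta$.

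For the complex case I would write $\mathbf{R}_{\mathcal{S}}=\mathbf{I}+i\beta\mathbf{M}$, where $\mathbf{M}$ is real and skew-symmetric (forced by $\mathbf{R}$ being Hermitian with purely imaginary off-diagonal entries) with off-diagonal entries in $\{\pm1\}$. A $3\times 3$ real skew-symmetric matrix has eigenvalues $0$ and $\pm i\sqrt{a^2+b^2+c^2}$, where $a,b,c$ denote its three independent entries; since each is $\pm1$ this equals $0,\pm i\sqrt{3}$ regardless of the sign pattern. Hence $\mathbf{R}_{\mathcal{S}}$ has eigenvalues $1$ and $1\pm\sqrt{3}\,\beta$, so $\lambda_{\max}(\mathbf{R}_{\mathcal{S}})=1+\sqrt{3}\,\beta$, which yields the stated complex formula after multiplying by $m/n$.

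For the real case I would write $\mathbf{R}_{\mathcal{S}}=\mathbf{I}+\beta\mathbf{S}$ with $\mathbf{S}$ symmetric, zero diagonal, and off-diagonal entries $a,b,c\in\{\pm1\}$. A direct expansion of $\det(\mathbf{S}-\lambda\mathbf{I})$ gives the characteristic polynomial $\lambda^3-(a^2+b^2+c^2)\lambda-2abc=\lambda^3-3\lambda-2abc$, so the spectrum depends only on the sign product $abc\in\{\pm1\}$. When $abc=1$ it factors as $(\lambda-2)(\lambda+1)^2$, giving $\mathbf{S}$-eigenvalues $\{2,-1,-1\}$ and $\lambda_{\max}(\mathbf{R}_{\mathcal{S}})=1+2\beta$; when $abc=-1$ it factors as $(\lambda+2)(\lambda-1)^2$, giving $\{-2,1,1\}$ and $\lambda_{\max}(\mathbf{R}_{\mathcal{S}})=1+\beta$. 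These two outcomes are precisely the cases $\epsilon=2$ and $\epsilon=1$ of the stated formula.

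The main obstacle is the real case, where---unlike the two-element subset and unlike the complex three-element subset---the subset norm genuinely depends on the choice of signs. The point to get right is that the single sign-dependent invariant is the product $abc$ (equivalently, whether the three off-diagonal entries multiply to $+1$ or $-1$), and that in both resulting spectra the dominant eigenvalue is the claimed one; this ordering is automatic since $\beta>0$ forces $1+2\beta>1-\beta$ and $1+\beta>1-2\beta$. No hard computation is involved, since the cubic factors over the rationals in each case.
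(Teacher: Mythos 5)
Your proposal is correct, and it reaches the same eigenvalues as the paper, but the computational core is genuinely different. The paper follows the same template you cite from Lemma \ref{le:two-subset}: it writes a putative eigenvector as $v=c_1f_1+c_2f_2+c_3f_3$, equates coefficients in $\mathbf{F}_{\mathcal{S}}v=\lambda v$ to get the $3\times 3$ system (\ref{LEQNS}), eliminates $\lambda$ pairwise, and then exhibits the solutions for the ratios $c_2/c_1,\,c_3/c_1$ (in the real case organized as ``Type I'' and ``Type II'' sign patterns) before back-substituting to read off $\lambda$; the intermediate solution formulas are simply stated, not derived, and the case analysis over signs is somewhat opaque. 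You instead compute the spectrum of $\mathbf{R}_{\mathcal{S}}$ directly from its characteristic polynomial, exploiting structure: writing $\mathbf{R}_{\mathcal{S}}=\mathbf{I}+i\beta\mathbf{M}$ with $\mathbf{M}$ real skew-symmetric shows the complex spectrum $\{1,\,1\pm\sqrt{3}\beta\}$ is sign-independent because a $3\times 3$ skew-symmetric spectrum depends only on $a^2+b^2+c^2$, and writing $\mathbf{R}_{\mathcal{S}}=\mathbf{I}+\beta\mathbf{S}$ reduces the real case to the rationally factoring cubic $\lambda^3-3\lambda-2abc$. This buys two things the paper's proof does not make visible: a structural reason why the complex norm is uniform over sign choices, and a precise criterion for which value of $\epsilon$ occurs (namely $\epsilon=2$ when the product of the three off-diagonal signs is $+1$, and $\epsilon=1$ when it is $-1$), which sharpens the lemma's ``$\epsilon=1$ or $2$.'' What the paper's eigenvector-ansatz method buys in exchange is uniformity with its size-two argument and explicit eigenvectors, and it stays entirely in the frame-vector formalism. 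Both proofs rest on the same reduction $\lVert\mathbf{F}_{\mathcal{S}}\rVert=\tfrac{m}{n}\lVert\mathbf{R}_{\mathcal{S}}\rVert$ justified by linear independence via $\det\mathbf{R}_{\mathcal{S}}\neq 0$ (or, as you note, by the standard fact that $\mathcal{F}_{\mathcal{S}}\mathcal{F}_{\mathcal{S}}^*$ and $\mathcal{F}_{\mathcal{S}}^*\mathcal{F}_{\mathcal{S}}$ share nonzero eigenvalues, which makes the independence check dispensable), so no gap there.
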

\begin{proof}
Again, without loss of generality, let $\mathcal{S}=\{1,2,3\}$.  We first show that $\mathcal{F}_{\mathcal{S}}$ is a linearly independent set for $m\geq 3$.  Set $\alpha=\sqrt{(n-m)/(m(n-1))}$.  Then using (\ref{eq:R-matrix}), we have
\begin{align*}
    \det \mathbf{R}_{\mathcal{S}}& =
    \det 
    \left(
    \begin{array}{ccc}
    1 & \pm c\alpha & \pm c\alpha \\
    \mp \bar{c}\alpha & 1 & \pm c\alpha \\
    \mp \bar{c}\alpha & \mp \bar{c}\alpha & 1
    \end{array}
    \right) \\
    & = 1+\alpha^2
    \pm \bar{c}\alpha
    \left(
    c\alpha\pm \alpha^2
    \right) 
    \pm
    \bar{c}\alpha
    \left(
    \alpha^2\pm c\alpha
    \right) \\ 
    & = 1+\alpha^2\pm \alpha^2 \pm \alpha^2 \pm
    \bar{c}\alpha^3 \pm \bar{c}\alpha^3
\end{align*}
where $c=1$ if $\mathbb{H}=\mathbb{R}$ and $c=i$ if $\mathbb{H}=\mathbb{C}$, and each entry in the upper-half of the diagonal of $\mathbf{R}_{\mathcal{S}}$ can take on any choice of signs ($\pm$).
It follows that
\begin{align*}
    \det \mathbf{R}_{\mathcal{S}} & \geq 1-\alpha^2 -2\alpha^3 > 1-\frac{3}{m}\geq 0
\end{align*}
for $m\geq 3$. Thus, $\mathcal{F}_{\mathcal{S}}$ is a linearly independent set.
To compute
\begin{equation*}
    \lVert \mathbf{F}_{\mathcal{S}}\rVert=\lVert f_1\otimes f_1+f_2\otimes f_2+f_3\otimes f_3\rVert,
\end{equation*}
we again solve the eigenvalue problem $\mathbf{F}_{\mathcal{S}}v=\lambda v$, where we assume $\lambda\neq 0$ and
\begin{equation*}
    v=c_1 f_1 + c_2 f_2 + c_3 f_3
\end{equation*}
since $\mathbf{F}_{\mathcal{S}}v \in \mathrm{span}(\mathcal{F}_{\mathcal{S}})$.  Then
\begin{align*}
    \mathbf{F}_{\mathcal{S}}v &=
    f_1(\beta_{1,1} c_1+\beta_{1,2} c_2+\beta_{1,3}c_3)
    +f_2(\beta_{2,1}c_1+\beta_{2,2}c_2+\beta_{2,3}c_3)\\
    & \ \ \ \ +f_3(\beta_{3,1}c_1+\beta_{3,2}c_2+\beta_{3,3}c_3),
\end{align*}
where $\beta_{j,j}=f_j^*f_j$ and $\beta_{j,k}=f_j^*f_k$. We equate coefficients from $\mathbf{F}_{\mathcal{S}}v=\lambda v$ to obtain
\begin{align}\label{LEQNS}
    \beta_{1,1} c_1+\beta_{1,2} c_2+\beta_{1,3}c_3&=\lambda c_1\\
    \beta_{2,1}c_1+\beta_{2,2}c_2+\beta_{2,3}c_3 &= \lambda c_2\notag\\
    \beta_{3,1}c_1+\beta_{3,2}c_2+\beta_{3,3}c_3 &= \lambda c_3.\notag
\end{align}
We then combine pairs of equations to eliminate $\lambda$, which results in
\begin{align*}
    \beta_{1,1}c_1 c_2 +\beta_{1,2}c_2^2+\beta_{1,3}c_3 c_2&=\beta_{2,1}c_1^2+\beta_{2,2}c_2 c_1+\beta_{2,3}c_3 c_1\\
    \beta_{2,1}c_1 c_3+\beta_{2,2}c_2 c_3+\beta_{2,3}c_3^2 &= \beta_{3,1}c_1 c_2+\beta_{3,2}c_2^2+\beta_{3,3}c_3 c_2\\
    \beta_{1,1} c_1 c_3+\beta_{1,2} c_2 c_3+\beta_{1,3}c_3^2 &= \beta_{3,1}c_1^2+\beta_{3,2}c_2 c_1+\beta_{3,3}c_3 c_1
\end{align*}
Then using the relations among the coefficient $\beta_{j,l}$ due to $\mathbf{R}$ being a Hermitian matrix, we find there are three solutions for $c_1,c_2,c_3$.  In the case where $\mathbb{H}=\mathbb{C}$, these solutions take the form
\begin{equation*}
    c_2 = -\frac{\beta_{1,3}}{\beta_{2,3}}c_1, c_3 = -\frac{\beta_{1,2}}{\beta_{2,3}}c_1,
\end{equation*}
and
\begin{align*}
    c_2&=c_1\frac{\beta_{1,3}\beta_{2,3}\mp\sqrt{-\beta_{1,2}^2\left(\beta_{1,2}^2+\beta_{1,3}^2+\beta_{2,3}^2\right)}}{\beta_{1,2}^2+\beta_{1,3}^2},\\[4pt]
    c_3&=\frac{c_1}{\beta_{1,2}}\frac{\beta_{2,3}\beta_{1,2}^2\pm\beta_{1,3}\sqrt{-\beta_{1,2}^2\left(\beta_{1,2}^2+\beta_{1,3}^2+\beta_{2,3}^2\right)}}{\beta_{1,2}^2+\beta_{1,3}^2}.
\end{align*}
We take one of our original equations in (\ref{LEQNS}) and isolate $\lambda$ to get
\begin{equation*}
    \lambda = \beta_{1,1},\beta_{1,1} \pm i\sqrt{\beta_{1,2}^2+\beta_{1,3}^2+\beta_{2,3}^2}.
\end{equation*}
Finally, we substitute values for $\beta_{j,l}$ from formula (\ref{INNERPROD}) to obtain three non-zero eigenvalues:
\begin{equation*}
    \lambda = \frac{m}{n},\frac{m}{n}\left(1 \pm \sqrt{\frac{3(n-m)}{m(n-1)}}\right).
\end{equation*}
Thus,
\begin{equation*}
     \lVert \mathbf{F}_{\mathcal{S}}\rVert=\lambda_{max}=\frac{m}{n}\left(1+ \sqrt{\frac{3(n-m)}{m(n-1)}}\right).
\end{equation*}

In the case $\mathbb{H}=\mathbb{R}$, we find that the three solutions for $c_1$, $c_2$, $c_3$ are of two types:
\begin{equation}
\mathrm{Type \ I}:
\begin{array}{ll}
c_1=0, & c_2=-c_3; \\
c_2=\pm c_1, & c_3=\pm c_1; \\
c_2=\mp \frac{c_1}{2}, & c_3=\mp \frac{c_1}{2}
\end{array}
\end{equation}
or
\begin{equation}
\mathrm{Type \ II}:
\begin{array}{ll}
c_1=0, & c_2=c_3; \\
c_2=\pm c_1, & c_3=\mp c_1; \\
c_2=\mp \frac{c_1}{2}, & c_3=\pm \frac{c_1}{2}.
\end{array}
\end{equation}
It follows that there are two distinct non-zero eigenvalues for $\lambda$ for all possible choice of signs in the solutions for $c_1$, $c_2$, and $c_3$.  Namely, they take the form
\begin{equation*}
    \lambda=\frac{m}{n}\left(1 \pm \sqrt{\frac{n-m}{m(n-1)}}\right), \frac{m}{n}\left(1 \mp 2\sqrt{\frac{n-m}{m(n-1)}}\right),
\end{equation*}
where the first eigenvalue listed has multiplicity two.
Thus,
\begin{equation*}
     \lVert \mathbf{F}_{\mathcal{S}}\rVert=\lambda_{max}=\frac{m}{n}\left(1+\epsilon \sqrt{\frac{(n-m)}{m(n-1)}}\right),
\end{equation*}
where $\epsilon=1$ or 2.
\end{proof}

Lemma \ref{le:subset-size-three} leads to a similar result on the norm bound for subsets of size three as previously obtained for subsets of size two.
\begin{theorem}\label{subset3}
Let $\mathcal{F}$ be an ETF of size $n$ constructed by the $\mathbf{R}$-matrix in Corollary \ref{COR24}. Suppose $\mathcal{P}=\{\mathcal{S}_1,\ldots,\mathcal{S}_r\}$ is a partition of $[n]$ with $\lvert \mathcal{S}_h\rvert\leq 3$ for every $h\in[r]$.  Then the norm $\lVert \mathbf{F}_{\mathcal{S}_h}\rVert$ satisfies the following inequality
\begin{equation}\label{eqn74}
    \lVert \mathbf{F}_{\mathcal{S}_h}\rVert\leq
    \left\lbrace
    \begin{array}{cc}
    \delta +\epsilon \sqrt{\frac{\delta}{r}} & \textit{ if } \mathbb{H}=\mathbb{R}; \\ [4pt]
    \delta +\sqrt{\frac{3\delta}{r}} & \textit{ if } \mathbb{H}=\mathbb{C}
    \end{array}
    \right.
\end{equation}
for every $h\in[r]$ where $\delta=m/n$ and $\epsilon=1$ or 2.
\end{theorem}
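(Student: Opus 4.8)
The plan is to mirror the structure of the proof of Theorem \ref{subset2}, splitting into cases according to the cardinality of each subset $\mathcal{S}_h$ and substituting the exact norm formulas that have already been computed. The single structural fact that powers the final estimate is that, because $\mathcal{P}$ partitions $[n]$ into $r$ nonempty parts, we always have $r\leq n$; this is what converts a bound written in terms of $n$ into one written in terms of $r$.

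First I would dispose of the subsets of size one and two. If $\lvert\mathcal{S}_h\rvert=1$, then $\mathbf{F}_{\mathcal{S}_h}=f_j\otimes f_j$ for a single frame vector and $\lVert\mathbf{F}_{\mathcal{S}_h}\rVert=m/n=\delta$, which is dominated by both right-hand sides of (\ref{eqn74}). If $\lvert\mathcal{S}_h\rvert=2$, then Theorem \ref{subset2} (equivalently Lemma \ref{le:two-subset}) already gives $\lVert\mathbf{F}_{\mathcal{S}_h}\rVert\leq\delta+\sqrt{\delta/r}$; since $\epsilon\geq 1$ and $\sqrt{\delta/r}\leq\sqrt{3\delta/r}$, this is at most either branch of (\ref{eqn74}).

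The substantive case is $\lvert\mathcal{S}_h\rvert=3$, where I would insert the exact norm from Lemma \ref{le:subset-size-three} and then coarsen it using two elementary estimates: the inequality $\tfrac{n-m}{n-1}\leq 1$ (equivalently $m\geq 1$), which yields $\sqrt{\tfrac{n-m}{m(n-1)}}\leq\tfrac{1}{\sqrt m}$, and the bound $r\leq n$, which yields $\tfrac{1}{\sqrt n}\leq\tfrac{1}{\sqrt r}$. In the complex case this gives the chain
\[
\frac{m}{n}\left(1+\sqrt{\frac{3(n-m)}{m(n-1)}}\right)
\leq \frac{m}{n}+\sqrt{3}\cdot\frac{\sqrt m}{n}
=\delta+\sqrt{3}\cdot\frac{1}{\sqrt n}\sqrt{\delta}
\leq \delta+\sqrt{\frac{3\delta}{r}},
\]
and in the real case the identical chain with the factor $\epsilon\in\{1,2\}$ replacing $\sqrt{3}$ produces $\delta+\epsilon\sqrt{\delta/r}$.

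I do not anticipate a genuine obstacle here beyond bookkeeping, since the hard analytic work—computing the full spectrum of $\mathbf{F}_{\mathcal{S}}$ for $\lvert\mathcal{S}\rvert=3$ and verifying that $\mathcal{F}_{\mathcal{S}}$ is linearly independent for $m\geq 3$—is already carried out in Lemma \ref{le:subset-size-three}. The one point that calls for care is the real case, where $\epsilon$ is not a fixed constant but is determined by the sign pattern of the off-diagonal entries of $\mathbf{R}_{\mathcal{S}}$; I would keep $\epsilon$ symbolic throughout and observe that the displayed estimate holds uniformly for whichever value, $1$ or $2$, the largest eigenvalue selects.
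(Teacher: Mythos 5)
Your proposal is correct and follows essentially the same route as the paper: reduce to the cardinality-three case (cardinalities one and two being covered by Theorem \ref{subset2}), insert the exact norm from Lemma \ref{le:subset-size-three}, and coarsen via $\sqrt{\tfrac{n-m}{m(n-1)}}\leq\tfrac{1}{\sqrt{m}}$ together with $r\leq n$. The only cosmetic difference is that you write out the complex case and defer the real case to the symbolic factor $\epsilon$, whereas the paper writes out the real case and omits the complex one.
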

\begin{proof} If suffices to prove for cardinality equal to 3 since the inequality is true for subsets of cardinality less than or equal to 2 from Theorem \ref{subset2}.  We therefore assume $|\mathcal{S}_h|=3$. Then since $r \leq n$, it follows from Lemma \ref{le:subset-size-three} for the case $\mathbb{H}=\mathbb{R}$ that
\begin{align*}
\lVert \mathbf{F}_{\mathcal{S}}\rVert & =  \frac{m}{n}\left(1+ \epsilon\sqrt{\frac{(n-m)}{m(n-1)}}\right) \\
& \leq \frac{m}{n}\left(1+ \epsilon \sqrt{\frac{1}{m}}\right)
= \frac{m}{n}+\frac{\epsilon}{\sqrt{n}}\cdot \sqrt{\frac{m}{n}} \\
& \leq \delta +\epsilon \sqrt{\frac{\delta}{r}}
\end{align*}
A similar derivation holds for the case $\mathbb{H}=\mathbb{C}$, which we omit.
\end{proof}

Observe that we have the same implication as we did for a subset of cardinality two. The norm for a subset of cardinality three will always be sharper than MSS bound, although not as sharp compared to the subset of size 2. Figure \ref{fig2} gives a comparison of all subset norm bounds discussed in this paper.  

\begin{figure}[h]
\centering
\includegraphics[scale=0.5]{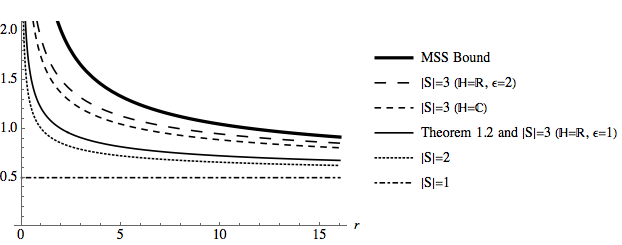}
\caption{Comparison of subset norm bounds for $\delta=1/2$.}
\label{fig2}
\end{figure}

Lastly, we note that for equiangular tight frames, numerical calculations show that subsets of frame vectors having cardinality 4 or higher do not have uniform subset norms (as Lemma \ref{le:subset-size-three} has revealed). Therefore, it is difficult to solve the corresponding system of equations to obtain exact formulas for these subset norms and prove that they satisfy the MSS bound.  This remains an open problem for subsets of arbitrary size.

\bibliographystyle{amsplain}

\begin{thebibliography}{10}

\bibitem{CK} Casazza, P. G., and G. Kutyniok, 2013: \textit{Finite Frames: Theory and Applications.} Springer, New York, New York,
2013.

\bibitem{OLE1} Christensen, O., 2008: \textit{Frames And Bases: An Introductory Course.} Birkh\"{a}user,  Boston, Massachusetts, 2008.

\bibitem{OLE2} Christensen, O., 2003: \textit{An Introduction to Frames and Riesz Bases.} Birkh\"{a}user, Boston, Massachusetts, 2003.

\bibitem{CONF} Goethals, J. M., and J. J. Seidel, 1967: Orthogonal matrices with zero diagonal. \textit{Canad. J. Math}, \textbf{19,} 1000--1010.

\bibitem{GREAVES} Greaves, G., and Suda, S., 2016: Symmetric and Skew-symmetric $\{0,\pm 1\}$-matrices with large determinants. arXiv:1601.02769.

\bibitem{HKLW} Han, D., K. Kornelson, D. Larson, and E. Weber, 2007: \textit{Frames for Undergraduates.} Amer. Math. Soc., 295 pp.

\bibitem{KOVA} Kovačević, J., and A. Chebira, 2008: An Introduction to Frames. \textit{Foundations and Trends in Signal Processing}, \textbf{2,} 1--94.

% \bibitem{MSS1} Marcus, A. W., D. A. Spielman, and N. Srivastava, 2015: Interlacing families I: Mixed characteristic polynomials and the Kadison–Singer problem. \textit{Annals of Mathematics}, \textbf{182,} 307--325.

\bibitem{MSS2} Marcus, A. W., D. A. Spielman, and N. Srivastava, 2015: Interlacing families II: Mixed characteristic polynomials and the Kadison–Singer problem. \textit{Annals of Mathematics}, \textbf{182,} 327--350.

% \bibitem{MEYER} Meyer, C. D., 2000: \textit{Matrix Analysis and Applied Linear Algebra.} SIAM, 718 pp.

% \bibitem{PALEY} Paley, R. E. A. C., 1933: On Orthogonal Matrices. \textit{J. Math. Phys.}, \textbf{12,} 311--320.

\bibitem{GRSS} Strohmer, T., and R. Heath, 2003: Grassmannian frames with applications
to coding and communication. \textit{Applied and Computational Harmonic Analysis}, \textbf{41,} 257--275.

\end{thebibliography}

\end{document}